\def\beq{\begin{equation}}
\def\eeq{\end{equation}}
\newcommand{\Z}{{\mathbb Z}}
\newcommand{\R}{{\mathbb R}}
\newcommand{\Q}{{\mathbb Q}}
\newcommand{\C}{{\mathbb C}}
\newcommand{\T}{{\mathbb T}}
\newcommand{\E}{{\mathbb E}}
\newcommand{\A}{{\mathbb A}}
\newtheorem{theorem}{Theorem}[section]
\newtheorem{remark}[theorem]{Remark}
\newtheorem{lemma}[theorem]{Lemma}
\newtheorem{defi}[theorem]{Definition}
\newtheorem{prop}[theorem]{Proposition}
\newtheorem{corollary}[theorem]{Corollary}
\newtheorem*{thmh}{Main Theorem}
\newtheorem*{parab}{[Parabolicity $\Rightarrow$ ac] Lemma}
\begin{document}

\title{Invariant graphs and spectral type of  Schr\"odinger operators}

\author{Artur Avila}
\address{Universit\"at Z\"urich, Institut f\"ur Mathematik, Winterthurerstrasse 190, CH-8057 Z\"urich, Switzerland, \& IMPA, Estrada Dona Castorina 110, Rio de Janeiro, Brazil}
\email{artur.avila@math.uzh.ch}

\author{Konstantin Khanin}
\address{Department of Mathematics, University of Toronto, Bahen Centre, 40 St. George St, Toronto, ON M5S 2E4 Canada \& IITP RAN, Bolshoy Karetny per. 19, Moscow 127051 Russia}
\email{khanin@math.utoronto.ca}

\author[Martin Leguil]{Martin Leguil$^*$}
\thanks{$^*$M.L. was supported by the ERC project 692925 NUHGD of Sylvain Crovisier.}
\address{Department of Mathematics, University of Toronto, Bahen Centre, 40 St. George St, Toronto, ON M5S 2E4 Canada \& CNRS-Laboratoire de Math\'ematiques d’Orsay, UMR 8628, Universit\'e Paris-Saclay, Orsay Cedex 91405, France}
\email{martin.leguil@math.u-psud.fr}

\begin{abstract} 
In this paper we study spectral properties of Schr\"odinger operators with quasi-periodic potentials related to quasi-periodic action minimizing trajectories for analytic twist maps. We prove that the spectrum contains a component of absolutely continuous spectrum provided that the corresponding trajectory of the twist map belongs to an analytic invariant curve. 
\end{abstract}

\maketitle

\tableofcontents

%

\section{Introduction}

In this paper we discuss connections between the Aubry-Mather theory and the spectral theory of Schr\"odinger operators with quasi-periodic potentials.
The Aubry-Mather theory was developed in the 1980s. In a certain sense it can be considered as a global extension of the KAM theory. It also provides
a variational approach to the KAM phenomenon. In what follows we discuss only the 2D twist map setting  corresponding to Hamiltonian systems with two 
degrees 
of freedom. Namely, let $\T:=\R/\Z$ and consider a family $\{\psi_{\lambda f}\}_{\lambda \in \R}$ of standard type maps on the cylinder:
$$
\psi_{\lambda f}\colon \left\{
\begin{array}{rcl}
\T\times \R &\to& \T\times \R,\\
(\varphi,r) &\mapsto& (\varphi+r+\lambda f(\varphi)\ \mathrm{mod}\  1,r+\lambda f(\varphi)),
\end{array}
\right.
$$
where $f$ is a smooth $1$-periodic function with mean value  zero and $\lambda$ is a coupling constant. The main object of interest is the set of 
quasi-periodic trajectories
for the map $\psi_{\lambda f}$ for a fixed irrational rotation number $\alpha$. One can show that such trajectories always exist. Moreover,  for some $a=a(\alpha) \in \R$, they correspond to  
minimizers for the Lagrangian action $A_{\lambda f,\alpha}$:
$$
A_{\lambda f,\alpha}((u_n)_{n \in \Z}):=\frac{1}{2}\sum_{n \in \Z} (u_{n+1}-u_n-a)^2 +\lambda \sum_{n \in \Z} F(u_n),
$$
where $F'=f$. 
However the behavior of such minimizers depends on the coupling constant $\lambda$. For small values of $|\lambda|$ the set of minimizers belongs to a 
smooth invariant 
curve, and under certain arithmetic conditions, the dynamics on this curve is conjugated to the rigid rotation by angle $\alpha$. On the other hand, for large values of $|\lambda|$ minimizers form  
disjoint 
Cantor-type sets, sometimes called \textit{cantori}. The theory describing properties of such invariant sets is often called the weak KAM theory. It is 
believed that one has a sharp transition in the parameter $\lambda$. Namely, for $\lambda < \lambda_{cr}(\alpha)$, there exists a smooth invariant curve (analytic if $f$ is analytic), for $\lambda = \lambda_{cr}(\alpha)$, an invariant curve still exists but it is not $C^{\infty}$ anymore (perhaps only 
$C^{1+\epsilon}$-smooth),
so that the conjugacy with the  rigid rotation is only topological, and the invariant measure is singular. Finally, for $\lambda > \lambda_{cr}(\alpha)$ the 
minimizers form
a Cantor-type set of zero Lebesgue measure. In fact it is even expected that  the Hausdorff dimension of such sets vanishes. It is also expected that
the dynamical properties of minimizing trajectories are very different before and after the transition. The trajectories belonging to smooth invariant 
curves are
elliptic with zero Lyapunov exponents, while trajectories belonging to cantori are expected to be hyperbolic. Although the above transition is
confirmed by many numerical studies, at present there are very few rigorous results in this direction, especially related to the critical case
$\lambda=\lambda_{cr}(\alpha)$.

Minimization of the Lagrangian action leads to the discrete Euler-Lagrange equation.  
In particular, any minimizing sequence $(\varphi_n)_{n\in \Z}$ must be related to a trajectory of the map $\psi_{\lambda f}$. The second differential of the action functional can be written as a quadratic 
form $(\mathcal{H}u,u)$,
where $\mathcal{H}$ is the 1D Schr\"odinger operator
$$
\mathcal{H}\colon
(u_n)_{n\in \Z} \mapsto (u_{n+1}+u_{n-1}+V_0(\varphi_n) u_n)_{n\in \Z}, 
$$
with $V_0:=-f'-2$. 

 Since a minimizing trajectory is quasi-periodic, the corresponding Schr\"odinger operator will be an 
operator with
a quasi-periodic potential. In fact, this construction leads to a one-parameter family of such potentials parametrized by the coupling constant $\lambda$.
It turns out that the potentials are smooth for $\lambda < \lambda_{cr}(\alpha)$ and discontinuous in the case $\lambda > \lambda_{cr}(\alpha)$. 

In this paper we propose to study the spectral properties of such families of Schr\"odinger operators.  We prove that in the KAM regime, when
there exists an analytic invariant curve, the Schr\"odinger operator has a component of absolutely continuous spectrum. We construct such a component 
near the edge of the spectrum. More precisely the following Main Theorem holds.\\

\textbf{Main Theorem.} Let $f$ be an analytic $1$-periodic function with  zero mean value. Suppose the standard-type map $\psi_{\lambda f}$ has an analytic invariant curve
homotopic to the base with a rotation number $\alpha$ of Brjuno type (see \eqref{Brjuno condtion} for a definition). Then the energy $E=0$ belongs to the spectrum of the Schr\"odinger operator $\mathcal{H}$, $E=0$ is the right 
edge of the spectrum: $\Sigma(\mathcal{H}) \cap (0, +\infty)= \emptyset$, and there exists $\varepsilon_0>0$ such that the spectral measures on $[-\varepsilon_0, 0]$
are absolutely continuous (and positive).\\

A more technical formulation of the theorem will be given in Section \ref{subs main res}. The proof is based on a dynamical argument. It is easy to see  
that a  Schr\"odinger  cocycle for the energy $E=0$ is conjugated to the dynamical (Jacobi) cocycle associated to $\psi_{\lambda f}$. This allows us to show that it is, in fact, reducible to a  
constant parabolic cocycle. Then, using the arguments developed by Avila \cite{A1}, we can conclude that the spectrum is absolutely continuous in a 
neighborhood of $E=0$.

This component of absolutely continuous spectrum comes from almost reducibility properties of Schr\"odinger  cocycles for energies near $E=0$. In \cite{Avila1}, Avila showed that almost reducibility is stable among analytic quasi-periodic  $\mathrm{SL}(2,\R)$-cocycles  with irrational frequency. In a similar vein, the component of absolutely continuous  spectrum  we obtain is also stable in the following sense: when the invariant curve has Diophantine rotation number $\alpha$, it persists under small analytic perturbations of the potential. Moreover, by  \cite{EFK}, this curve is also accumulated by other analytic invariant  curves with Diophantine rotation numbers close to $\alpha$. Our  Main Theorem guarantees stability in both senses, namely, under small analytic perturbations of the potential, and for these Diophantine rotation numbers,   the associated   Schr\"odinger operator  
also has a component of absolutely continuous spectrum.  

We view this theorem as a semi-global result. Namely we do not assume that the potential in the Schr\"odinger  operator is small. We only use the 
existence of an analytic invariant curve. We also conjecture that the critical value $\lambda_{cr}(\alpha)$ is a transition point.  
In other words, it is plausible that for all $\lambda>\lambda_{cr}(\alpha)$ the spectrum will be pure 
point. This would mean  that the dynamical transition from elliptic to hyperbolic behavior in the weak KAM theory reflects in a related transition 
in the spectral properties of the corresponding Schr\"odinger operators. It is an interesting problem to analyze the spectrum at the critical value $
\lambda=\lambda_{cr}(\alpha)$.

\subsection*{Acknowledgements} 
The authors would like to thank David Damanik, Svetlana Jitomirskaya, Rapha\"el Krikorian and Qi Zhou for useful discussions and encouragment on this project.

\section{Aubry-Mather theory and Schr\"odinger operators}\label{section prelimm}

\subsection{Conservative twist maps of the cylinder}

Let $C^\omega(\T,\R)$ be the set of one-periodic analytic functions on $\R$, and let $C_0^\omega(\T,\R)\subset C^\omega(\T,\R)$ be the subset of functions with zero average. 
For any   function  $f \in C_0^\omega(\T,\R)$, we let $\Psi=\Psi_f\colon \R^2 \to \R^2$, $(x,r) \mapsto   (x+r+f(x),r+f(x))$. It induces a map on the cylinder  $\A:=\T \times \R$:
$$
\psi=\psi_f\colon \left\{
\begin{array}{rcl}
\A &\to& \A,\\
(\varphi,r) &\mapsto& (\varphi+r+f(\varphi)\ \mathrm{mod}\  1,r+f(\varphi)).
\end{array}
\right.
$$
The map $\psi$ satisfies the twist property: for any $(\varphi,r) \in \A$, we have $\partial_r(\varphi+r+f(\varphi))=1>0$. 
For an arbitrary $f \in C_0^\omega(\T,\R)$, it is also natural to define a one-parameter family of twist maps  $(\psi_{\lambda f})_{\lambda \in \R}$, where $\lambda$ is called the \textit{coupling constant}. 

An important  case corresponds to the function $f_0\colon \varphi \mapsto \frac{1}{2\pi} \sin(2 \pi \varphi)$.  In this case, for any $\lambda \in \R$, the map $\psi_\lambda:=\psi_{\lambda f_0}$ is the \textit{Standard Map} with parameter $\lambda$. 

Given a point $(x,r)=(x_0,r_0) \in \R^2$, we let $((x_n,r_n))_{n \in \Z}$ be its orbit under $\Psi$:
\begin{equation}\label{eq orbit}
\left\{
\begin{array}{rcl}
x_{n+1} &=& x_n+r_{n+1},\\ 
r_{n+1} &=& r_n +f(x_n).
\end{array}
\right.
\end{equation}
Similarly, for $(\varphi,r)=(\varphi_0,r_0) \in \mathbb{A}$, we denote by $((\varphi_n,r_n))_{n \in \Z}$ its orbit under $\psi$; for $x_0\in \R$   such that $x_0\ \mathrm{mod}\  1=\varphi_0$, we have $(\varphi_n,r_n)=(x_n\ \mathrm{mod}\  1,r_n)$, for all $n \in \Z$. 

Let  $f \in C_0^\omega(\T,\R)$. The matrix of the differential of $\psi=\psi_f$ at $(\varphi,r)\in \A$ is  
$$
D\psi_{(\varphi,r)} =\begin{pmatrix}
1+f'(\varphi) & 1\\
f'(\varphi) & 1
\end{pmatrix} \in \mathrm{SL}(2,\R),
$$ 
hence $\psi$ is an analytic volume-preserving diffeomorphism with zero Calabi invariant:  $\psi\in \mathrm{Diff}^\omega_{m,0}(\A)$, where $dm=d\varphi dr$ is the Lebesgue measure. 

Let $F \in C^{\omega}_0(\T,\R)$ satisfy $F'=f$. For any $a \in \R$, we define a function  $h_{f,a}\colon \R^2 \to \R$ by the formula
$$
h_{f,a}(x_0,x_1):=\frac 12 (x_1-x_0-a)^2 + F(x_0), \qquad \forall\, (x_0,x_1) \in \R^2.
$$

Let $(\varphi_0,r_0) \in \A$, and let $(\varphi_1,r_1):=\psi(\varphi_0,r_0)$. Given $x_0 \in \R$  such that $x_0\ \mathrm{mod}\  1=\varphi_0$, we set $(x_1,r_1):=\Psi_f(x_0,r_0)$. The value $h_{f,a}(x_0,x_1)$ is independent of the choice of the lift $x_0 \in \R$ of $\varphi_0$, and thus, we may define
$$
h_{f,a}(\varphi_0,\varphi_1):=h_{f,a}(x_0,x_1). 
$$
In particular, given any two consecutive points $(\varphi_n,r_n)$, $(\varphi_{n+1},r_{n+1})$ in the orbit of $(\varphi,r)=(\varphi_0,r_0)\in \mathbb{A}$, the function $h_{f,a} (\varphi_n,\varphi_{n+1})$ is well-defined. 

Moreover, the function $h_{f,a}$  is \textit{generating} for $\psi=\psi_f$ in the following sense:  for $(\varphi_0,r_0) \in \A$ and $(\varphi_1,r_1):=\psi(\varphi_0,r_0)$, we have
\begin{equation*}
\left\{
\begin{array}{rcl}
\partial_1 h_{f,a}(\varphi_0,\varphi_1)&=&-(r_0-a),\\
\partial_2 h_{f,a}(\varphi_0,\varphi_1)&=& (r_1-a).
\end{array}
\right.
\end{equation*} 

\subsection{Action-minimizing Aubry Mather sets}

Let $f \in C_0^\omega(\T,\R)$, and let $F \in C^{\omega}_0(\T,\R)$ be the anti-derivative of $f$ with zero average.
Given $a \in \R$, 
we define the action of a sequence $u=(u_n)_{n \in \Z}\in \R^\Z$ as a formal sum:
$$
A_{f,a}(u):=\sum_{n \in \Z} h_{f,a}(u_n,u_{n+1})=\frac{1}{2}\sum_{n \in \Z} (u_{n+1}-u_n-a)^2 +\sum_{n \in \Z} F(u_n).
$$
The sequence $u=(u_n)_{n \in \Z}$ is called a \textit{minimizer of the action} $A_{f,a}$ if for any compact perturbation $\tilde u=(\tilde u_n)_{n \in \Z}$ of $(u_n)_{n \in \Z}$, the difference in action satisfies $A_{f,a}(\tilde u)-A_{f,a}(u)\geq 0$. Notice that the difference of actions is well defined although $A_{f,a}$ itself is just a formal series. 


Recall that minimizers of the action are associated to orbits of $\Psi_f$:
\begin{lemma}\label{lemma rappel}
Let $a \in \R$, and assume that the sequence $(x_n)_{n \in \Z}\in \R^\Z$ is a minimizer of the action $A_{f,a}$. 
We set $r_n:=x_{n}-x_{n-1}$, for all $n\in \Z$. Then, $((x_n,r_n))_{n \in \Z}$ is the orbit of $(x_0,r_0)$ under $\Psi_f$, and its projection $((\varphi_n,r_n))_{n \in \Z}$ on $\A$ is the orbit of $(\varphi_0,r_0)$ under $\psi_f$, with $\varphi_n:=x_n\ \mathrm{mod}\  1$, for all $n \in \Z$.  
\end{lemma}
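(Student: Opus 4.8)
The plan is to deduce from minimality the discrete Euler--Lagrange equations attached to the generating function $h_{f,a}$, and then to observe that these equations are precisely the orbit recursion \eqref{eq orbit}.

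First I would localize the variational condition by a single-site variation. Fix $n\in\Z$; for $t\in\R$ let $\tilde u(t)$ be the compact perturbation of $u=(x_m)_{m\in\Z}$ obtained by replacing $x_n$ with $x_n+t$ and leaving all other entries unchanged. Only the two summands $h_{f,a}(x_{n-1},\cdot)$ and $h_{f,a}(\cdot,x_{n+1})$ are affected, so
\[
g(t):=A_{f,a}(\tilde u(t))-A_{f,a}(u)=h_{f,a}(x_{n-1},x_n+t)+h_{f,a}(x_n+t,x_{n+1})-h_{f,a}(x_{n-1},x_n)-h_{f,a}(x_n,x_{n+1})
\]
is a genuine (finite) smooth function of $t$, even though $A_{f,a}$ is only a formal series. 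By minimality $g(t)\geq 0=g(0)$ for all $t$, hence $g'(0)=0$, that is
\[
\partial_2 h_{f,a}(x_{n-1},x_n)+\partial_1 h_{f,a}(x_n,x_{n+1})=0\qquad(n\in\Z).
\]

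Next I would substitute the explicit derivatives of $h_{f,a}(y_0,y_1)=\tfrac12(y_1-y_0-a)^2+F(y_0)$, namely $\partial_1 h_{f,a}(y_0,y_1)=-(y_1-y_0-a)+f(y_0)$ and $\partial_2 h_{f,a}(y_0,y_1)=y_1-y_0-a$ (using $F'=f$; these are the generating relations already recorded). The Euler--Lagrange identity becomes $(x_n-x_{n-1})-(x_{n+1}-x_n)+f(x_n)=0$, i.e. $x_{n+1}-x_n=(x_n-x_{n-1})+f(x_n)$. With $r_n:=x_n-x_{n-1}$ this is exactly the pair $r_{n+1}=r_n+f(x_n)$ and $x_{n+1}=x_n+r_{n+1}$, which is \eqref{eq orbit}. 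Hence $((x_n,r_n))_{n\in\Z}$ is the orbit of $(x_0,r_0)$ under $\Psi_f$; reducing the first coordinate modulo $1$ and using that $\Psi_f$ is a lift of $\psi_f$ yields that $((\varphi_n,r_n))_{n\in\Z}$, with $\varphi_n:=x_n\ \mathrm{mod}\ 1$, is the orbit of $(\varphi_0,r_0)$ under $\psi_f$.

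I do not expect a real obstacle here: the only subtlety is that $A_{f,a}$ is defined only as a formal sum, which is precisely why the argument uses a compact (in fact single-site) perturbation, for which the difference of actions is an honest finite sum; everything else is the elementary computation identifying the Euler--Lagrange equation with \eqref{eq orbit}.
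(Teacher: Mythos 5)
Your proof is correct and follows essentially the same route as the paper: derive the discrete Euler--Lagrange equation from minimality and identify it with the orbit recursion \eqref{eq orbit}. The only cosmetic difference is that you localize to single-site variations (which makes the finiteness of the action difference completely transparent), whereas the paper writes the first-order expansion \eqref{eqaction} for a general compactly supported perturbation $\delta$ and then lets $\delta$ range over all such directions; the content and conclusion are identical.
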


\begin{proof}
Given any $u=(u_n)_{n \in \Z}\in \R^\Z$ and $\delta=(\delta_n)_{n \in \Z}\in \R^\Z$ satisfying $\delta_n=0$ for all but finitely many  integers $n \in \Z$, with $\|\delta\|:=\sqrt{\sum_k \delta_k^2}\ll 1$, we obtain
\begin{equation}\label{eqaction}
A_{f,a}(u+\delta)-A_{f,a}(u)=-\sum_{n \in \Z} (u_{n+1}-2u_n+u_{n-1}-f(u_n))\delta_n+O(\|\delta\|^2),
\end{equation}
where $(u+\delta)_n:=u_n+\delta_n$. Now, assume that the sequence $(x_n)_{n \in \Z}\in \R^\Z$ is a minimizer of the action $A_{f,a}$, and set $r_n:=x_{n}-x_{n-1}$, for all $n\in \Z$. Since $\delta$ can be taken arbitrarily small, we deduce that 
\begin{align*}
r_{n+1}&=x_{n+1}-x_{n}=x_{n}-x_{n-1}+f(x_n)=r_n+f(x_n),\\
x_{n+1}&=2 x_{n} - x_{n-1}+f(x_n)=x_n + r_{n+1}, 
\end{align*}  
for each $n\in \Z$. 
As a result,  \eqref{eq orbit} is satisfied, and $((x_n,r_n))_{n \in \Z}$ is the orbit of $(x_0,r_0)$ under $\Psi_f$.  Then, $((\varphi_n,r_n))_{n \in \Z}$ is the orbit of $(\varphi_0,r_0)$ under $\psi_f$, with $\varphi_n:=x_n\ \mathrm{mod}\  1$, for all $n \in \Z$.  
\end{proof}

\begin{remark}
The calculation above is just a derivation of the discrete Euler-Lagrange equation associated with the action given by $A_{f,a}$. 
\end{remark}

A $\psi_f$-invariant compact set $\mathcal{A} \subset \mathbb{A}$ is said to be $\psi_f$\textit{-ordered} if it projects injectively on $\T$, and the restriction $\psi_f|_{\mathcal{A}}$  preserves the natural order given by the projection. A classical result of Aubry-Mather theory   \cite{Aubry,AubryDaeron,Mather} states that for each irrational number $\alpha \in \R \setminus\Q$, there exist $a=a(\alpha) \in \R$ and a minimal ordered set $\mathcal{AM}_{f,\alpha}\subset \A$ with rotation number $\alpha$. It is comprised of  orbits $((\varphi_n,r_n))_{n \in \Z}$ under $\psi_f$ associated to sequences $(x_n)_{n \in \Z}$ which minimize the action $A_{f,a}$.  By some slight abuse of notation, we denote $A_{f,\alpha}:=A_{f,a(\alpha)}$ in the following. In particular, the rotation number $\alpha$ of $\mathcal{AM}_{f,\alpha}$ is  the rotation number  of any lifted orbit: for any $(\varphi_0,r_0)=(x_0\ \mathrm{mod}\  1,r_0)  \in \mathcal{AM}_{f,\alpha}$, we have 
$$
\alpha=\lim_{n \to+\infty} \frac{x_n- x_0}{n}=\lim_{n \to+\infty} \frac 1n \sum_{k=1}^n r_k. 
$$  
The set  $\mathcal{AM}_{f,\alpha}$ is  called the \textit{minimizing Aubry-Mather set} for the action $A_{f,\alpha}$. 
%
\begin{theorem}[\cite{Aubry,AubryDaeron,Mather,Birkhoff,Fathi,Herman}]\label{theo aurby mather}
For any $\alpha \in \R \setminus\Q$,   the associated minimizing Aubry-Mather set  $\mathcal{AM}_{f,\alpha}$  is either an invariant  graph $\Gamma_\gamma$ for some Lipschitz function $\gamma \colon \T \to \R$, or it projects one-to-one to a nowhere-dense Cantor set of $\T$. 
Moreover, if $\Gamma$ is an  invariant  curve  for $\psi_f$ homotopic to the base 
with irrational  rotation number $\alpha$,  it is a minimizing  Aubry-Mather set: we have $\Gamma=\mathcal{AM}_{f,\alpha}$. 
\end{theorem}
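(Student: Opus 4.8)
The argument is the classical one of Aubry--Mather theory, and I would organize it into four steps.

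\textbf{Step 1: the crossing lemma.} First I would establish Aubry's fundamental lemma. The key input is the strict convexity built into the twist condition: $\partial_1\partial_2 h_{f,a}\equiv -1<0$. From this one shows that any two distinct action-minimizers of $A_{f,a}$ cross at most once, i.e.\ if $u=(u_n)_n$ and $v=(v_n)_n$ are minimizers then $n\mapsto u_n-v_n$ changes sign at most once. The proof is the standard ``min--max'' trick: if $u-v$ changed sign twice, the coordinatewise minimum $\min(u,v)$ and maximum $\max(u,v)$ would be compact perturbations of, respectively, $u$ and $v$, and a direct computation exploiting $\partial_1\partial_2 h_{f,a}<0$ yields $A_{f,a}(\min(u,v))+A_{f,a}(\max(u,v))\le A_{f,a}(u)+A_{f,a}(v)$, with strict inequality whenever the two graphs genuinely cross, contradicting minimality of both. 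All comparisons are made on finite windows, so the formal nature of the series is harmless; the delicate points are the boundary terms and the analysis of the equality case of the convexity inequality, which is what upgrades ``cross, then stay on one side'' to ``at most one crossing''.

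\textbf{Step 2: ordering, the a priori Lipschitz bound, and the dichotomy.} Applying the crossing lemma to the orbits in $\mathcal{AM}_{f,\alpha}$ and their integer translates, I would deduce that $\mathcal{AM}_{f,\alpha}$ projects injectively to $\T$ under $\pi\colon(\varphi,r)\mapsto\varphi$ and that $\psi_f$ preserves the cyclic order this projection induces; hence $\psi_f|_{\mathcal{AM}_{f,\alpha}}$ is conjugate via $\pi$ to the restriction, to the closed set $P:=\pi(\mathcal{AM}_{f,\alpha})$, of an orientation-preserving circle homeomorphism $g$ with rotation number $\alpha$. Comparing a minimizer with the configuration obtained by displacing a single site, and again using the twist condition, one proves Mather's a priori Lipschitz estimate: there is $L=L(f,a)>0$ with $|r-r'|\le L\,\mathrm{dist}_\T(\varphi,\varphi')$ for all $(\varphi,r),(\varphi',r')\in\mathcal{AM}_{f,\alpha}$. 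Thus $\mathcal{AM}_{f,\alpha}$ is the graph over $P$ of a Lipschitz map, which extends (McShane) to a Lipschitz $\gamma\colon\T\to\R$. Finally, $P$ is a closed $g$-invariant set and $\mathcal{AM}_{f,\alpha}$ is minimal, so $P$ is a minimal set of a circle homeomorphism with irrational rotation number: by Poincar\'e's classification $P$ is either all of $\T$ — giving the invariant Lipschitz graph $\Gamma_\gamma=\mathcal{AM}_{f,\alpha}$ — or a perfect, nowhere-dense, totally disconnected compact set, i.e.\ a Cantor set onto which $\mathcal{AM}_{f,\alpha}$ projects one-to-one. This is the first assertion.

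\textbf{Step 3: invariant curves are Aubry--Mather sets.} Now let $\Gamma$ be an invariant curve homotopic to the base with irrational rotation number $\alpha$. By Birkhoff's theorem, $\Gamma$ is automatically the graph of a Lipschitz function and $\psi_f|_\Gamma$ projects to an orientation-preserving circle homeomorphism of rotation number $\alpha$. A second classical fact (Mather; Bangert) is that every orbit carried by such a rotational invariant curve is action-minimizing for $A_{f,\alpha}$ — the curve is an exact, calibrated Lagrangian graph. Hence $\Gamma$ is a $\psi_f$-ordered set of minimizers with rotation number $\alpha$, and minimality of $\mathcal{AM}_{f,\alpha}$ forces $\mathcal{AM}_{f,\alpha}\subseteq\Gamma$. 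Since the dynamics on a rotational invariant curve has no wandering interval (Denjoy's theorem when $\Gamma$ is smooth enough; in general this is part of the cited results), $\pi(\mathcal{AM}_{f,\alpha})$ cannot be a proper Cantor subset of $\T=\pi(\Gamma)$, so by Step 2 we are in the graph case $\pi(\mathcal{AM}_{f,\alpha})=\T$; two Lipschitz graphs over $\T$ with one contained in the other must coincide, whence $\Gamma=\mathcal{AM}_{f,\alpha}$.

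\textbf{Main obstacle.} The crux is Step 1: the crossing lemma is what makes $\mathcal{AM}_{f,\alpha}$ ordered, and treating the equality case of the convexity inequality correctly (to get exactly one crossing) requires care. The most computational piece is the a priori Lipschitz estimate in Step 2; and the only genuinely non-formal point in Step 3 is the absence of wandering intervals on a rotational invariant curve, which is precisely where the regularity hypotheses in the cited theorems do their work.
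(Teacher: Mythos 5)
The paper does not prove Theorem~2.3: it is quoted verbatim from the classical Aubry--Mather literature (Aubry, Aubry--Le Daeron, Mather, Birkhoff, Fathi, Herman) and used as a black box, so there is no ``paper's proof'' to compare your argument against. I will therefore assess your sketch on its own.

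Steps~1 and~2 are the standard and correct skeleton of the classical proof: the Aubry non-crossing lemma follows from $\partial_1\partial_2 h_{f,a}\equiv -1<0$ via the ``swap'' trick, the ordering and Mather's a priori Lipschitz estimate follow from it, and Poincar\'e's classification of minimal sets of circle homeomorphisms with irrational rotation number gives the graph/Cantor dichotomy. This matches the proofs in the references cited by the paper.

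Step~3 is where you should be more careful, and you have partially flagged the issue yourself. You invoke Denjoy to rule out wandering intervals of $g=\pi\circ\psi_f|_{\Gamma}$, but Birkhoff only gives you a \emph{Lipschitz} graph, hence only a bi-Lipschitz circle homeomorphism $g$, for which Denjoy's theorem does not apply (there are bi-Lipschitz Denjoy counterexamples). The fact that a rotational invariant curve of an exact area-preserving twist map carries \emph{minimal} dynamics is genuine but is not a regularity statement: it is a variational/symplectic fact (Mather, Bangert; see also Herman's Ast\'erisque volume), proved e.g.\ via the vanishing of Mather's $\Delta W$/flux across an invariant circle, or via calibration, rather than via Denjoy. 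A second, minor imprecision: ``minimality of $\mathcal{AM}_{f,\alpha}$ forces $\mathcal{AM}_{f,\alpha}\subseteq\Gamma$'' is not literally a consequence of minimality alone; the correct route is that $\Gamma$, being a closed ordered set of minimizers of rotation number $\alpha$, contains \emph{a} minimal ordered set of such minimizers, and by the non-crossing lemma all minimizers of rotation number $\alpha$ are mutually comparable, which gives uniqueness of that minimal ordered set and hence the containment. Neither point is a fatal gap for a sketch of a cited classical theorem, but the appeal to Denjoy should be replaced by the variational argument, and the containment should be justified through uniqueness as above.
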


Suppose that $\psi_f$ leaves invariant the   graph $\Gamma_\gamma=\{\Gamma_\gamma(\varphi):=(\varphi,\gamma(\varphi)): \varphi \in \T\}$\footnote{We shall use the same notation $\Gamma_\gamma$ for the natural map from $\T$ to $\A$ defined by the graph $\Gamma_\gamma$.}  with rotation number $\alpha \  \mathrm{mod}\ 1$ for some $\alpha \in \R\setminus\Q$. Then, the composition $\pi_1 \circ \psi_f \circ \Gamma_\gamma$ yields a circle homeomorphism $g=g_\gamma\colon \T \to \T$, where  $\pi_1 \colon \mathbb{A} \to \T$ is the projection on the first coordinate: 
$$
g(\varphi) :=\varphi+\gamma(\varphi)+f(\varphi)\ \mathrm{mod}\ 1,\qquad \forall\, \varphi \in \T.
$$ 
The map  $G=G_\gamma \colon \R \ni x \mapsto x+\gamma(x)+f(x)$ is a lift of $g$, 
and it has rotation number $\alpha\in \R\setminus\Q$. Since $g$ has rotation number $\alpha\  \mathrm{mod}\ 1$,  it is uniquely ergodic. We denote by $\nu$ its unique invariant probability measure. For any orbit $((\varphi_n,r_n))_{n \in \Z} \subset \mathcal{AM}_{f,\alpha}$, we have $r_k=\gamma(\varphi_k)=\gamma( g^k(\varphi_0))$, for all $k \in \Z$, hence 
$$
\alpha=\lim_{n \to+\infty} \frac 1n \sum_{k=1}^n r_k=\lim_{n \to+\infty} \frac 1n \sum_{k=1}^n \gamma( g^k(\varphi_0))=\int_{\T} \gamma(\varphi)\, d\nu(\varphi).
$$


\subsection{Dynamically defined quasi-periodic Schr\"odinger operators}\label{subsectin finfd}

Let $f \in C_0^\omega(\T,\R)$.  Fix $\alpha \in \R\setminus\Q$ and let us consider the  minimizing Aubry-Mather set $\mathcal{AM}_{f,\alpha}$. 
In the following, we choose a phase $\varphi_0 \in \T$ such that  $(\varphi_0,r_0) \in \mathcal{AM}_{f,\alpha}$ for some $r_0 \in \R$. Given $x_0 \in \R$ such that $\varphi_0=x_0 \ \mathrm{mod}\  1$, we denote by $((x_n,r_n))_{n \in \Z}$ the orbit of $(x_0,r_0)$ under $\Psi_f$ and let $((\varphi_n,r_n))_{n \in \Z}$ be the corresponding $\psi_f$-orbit. We  also denote by $(\cdot,\cdot)$ the standard inner product on $\ell^2(\Z)$,  and for $u \in \ell^2(\Z)$, we set $\|u\|:=\sqrt{(u,u)}$.   

In the proof of Lemma \ref{lemma rappel}, we have computed the first order term in the difference of actions between a sequence and a compact perturbation of it. It turns out that the second order term in this difference is given by some quadratic form associated to a Schr\"odinger operator, as shown by the next lemma. 

\begin{lemma}
	For any sequence $\delta=(\delta_n)_{n \in \Z}\in \R^\Z$ satisfying $\delta_n=0$ for all but finitely many integers $n \in \Z$ and such that $\|\delta\|\ll 1$, we have
	\begin{equation}\label{new eq action}
	A_{f,\alpha}((x_n+\delta_n)_{n\in \Z})-A_{f,\alpha}((x_n)_{n\in \Z})=-\frac 12(\mathcal{H}_{f,\alpha,\varphi_0}\delta,\delta)+O(\|\delta\|^3),
	\end{equation}
	where $\mathcal{H}_{f,\alpha,\varphi_0}$  is the dynamically defined \textit{Schr\"odinger operator}  associated to the analytic function $V_0:=-f'-2$ and the  $\psi_f$-ordered sequence $(\varphi_n)_{n \in \Z}$:
	$$
	\mathcal{H}_{f,\alpha,\varphi_0}\colon \left\{
	\begin{array}{rcl}
	\ell^2(\Z) &\to &\ell^2(\Z),\\
	(u_n)_{n\in \Z} &\mapsto& (u_{n+1}+u_{n-1}+V_0(\varphi_n) u_n)_{n\in \Z}.
	\end{array}
	\right.
	$$
\end{lemma}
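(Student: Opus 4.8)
The plan is to Taylor-expand the difference of actions to third order. Since $\delta$ has finite support, only finitely many summands of $A_{f,\alpha}$ are affected, so $A_{f,\alpha}((x_n+\delta_n)_{n\in\Z})-A_{f,\alpha}((x_n)_{n\in\Z})$ is a genuine finite sum and all manipulations below are legitimate despite $A_{f,\alpha}$ itself being only a formal series.

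First I would split the action into its ``kinetic'' part $\frac12\sum_n (u_{n+1}-u_n-a)^2$ and its ``potential'' part $\sum_n F(u_n)$. The kinetic part is a quadratic polynomial in $u$, so substituting $u_n=x_n+\delta_n$ and subtracting produces a linear term in $\delta$ together with the \emph{exactly} quadratic term $\frac12\sum_n(\delta_{n+1}-\delta_n)^2$, with no remainder at all. For the potential part I would expand $F(x_n+\delta_n)=F(x_n)+f(x_n)\delta_n+\frac12 f'(x_n)\delta_n^2+O(\delta_n^3)$ using $F'=f$, the remainder being controlled uniformly in $n$ because $f''$ is analytic and $1$-periodic, hence bounded on $\R$. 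Summing over the finitely many $n$ in the support of $\delta$ and using $\sum_n|\delta_n|^3\le \|\delta\|_\infty\|\delta\|^2\le\|\delta\|^3$ yields an $O(\|\delta\|^3)$ error term, in the $\ell^2$-norm as claimed.

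Collecting terms, the linear part is exactly $-\sum_n(x_{n+1}-2x_n+x_{n-1}-f(x_n))\delta_n$, which is precisely the first-order term from \eqref{eqaction}; it vanishes because $(x_n)_{n\in\Z}$, being a minimizer of $A_{f,\alpha}$, satisfies the discrete Euler--Lagrange equation $x_{n+1}-2x_n+x_{n-1}=f(x_n)$ (the orbit equation of Lemma~\ref{lemma rappel}). It then remains to identify the quadratic part $\frac12\sum_n(\delta_{n+1}-\delta_n)^2+\frac12\sum_n f'(x_n)\delta_n^2$ with $-\frac12(\mathcal{H}_{f,\alpha,\varphi_0}\delta,\delta)$. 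Expanding the square and re-indexing (permissible since $\delta$ has finite support) gives $\sum_n(\delta_{n+1}-\delta_n)^2=2\sum_n\delta_n^2-2\sum_n\delta_{n+1}\delta_n$, whereas a direct computation gives $(\mathcal{H}_{f,\alpha,\varphi_0}\delta,\delta)=2\sum_n\delta_{n+1}\delta_n+\sum_n V_0(\varphi_n)\delta_n^2$; using that $f$ is $1$-periodic, so $f'(x_n)=f'(\varphi_n)$, together with $V_0(\varphi_n)=-f'(\varphi_n)-2$, the two expressions are seen to coincide.

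There is no deep obstacle here; the proof is essentially bookkeeping. The only points that require a little care are: (i) observing that the formal series causes no difficulty because $\delta$ is finitely supported; (ii) obtaining the remainder estimate with respect to the $\ell^2$-norm rather than the $\ell^1$-norm, which is exactly where the inequality $\sum_n|\delta_n|^3\le\|\delta\|^3$ enters; and (iii) keeping track of signs and of the re-indexing in the discrete integration by parts so that the resulting coefficient is precisely $-\tfrac12$ and the potential comes out as $V_0=-f'-2$ rather than some sign variant.
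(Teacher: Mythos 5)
Your proposal is correct and follows essentially the same route as the paper: Taylor-expand the (finite) difference of actions, observe that the linear term vanishes because $(x_n)$ satisfies the discrete Euler--Lagrange equation, and identify the resulting quadratic form $\tfrac12\sum_n(\delta_{n+1}-\delta_n)^2+\tfrac12\sum_n f'(x_n)\delta_n^2$ with $-\tfrac12(\mathcal{H}_{f,\alpha,\varphi_0}\delta,\delta)$ by the discrete summation-by-parts you carry out. The only difference is organizational: you separate the kinetic (exactly quadratic) and potential parts and spell out the $O(\|\delta\|^3)$ error bound via $\sum_n|\delta_n|^3\le\|\delta\|_\infty\|\delta\|^2\le\|\delta\|^3$, details the paper leaves implicit.
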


\begin{proof}
Since $((x_n,r_n))_{n \in \Z}$ is a $\Psi_f$-orbit, we have $x_{n+1}-2x_n+x_{n-1}-f(x_n)=0$, for all $n \in \Z$. Equivalently, $(x_n)_{n \in \Z}$ is a critical point for the action $A_{f,\alpha}$, and the first order term in the difference \eqref{eqaction} vanishes for $(x_n)_{n \in \Z}$ 
in place of $(u_n)_{n \in \Z}$. 
Given any compact perturbation $(x_n+\delta_n)_{n \in \Z}$ of $(x_n)_{n \in \Z}$ for some sequence $\delta=(\delta_n)_{n \in \Z}\in \R^\Z$ with $\|\delta\|\ll 1$, the Taylor expansion of the difference in action is
\begin{align*}
	A_{f,\alpha}((x_n+\delta_n)_{n\in \Z})-A_{f,\alpha}((x_n)_{n\in \Z})&=\sum_{n \in \Z}\frac 12(\delta_{n+1}-\delta_n)^2 +f'(x_n) \frac{\delta_n^2}{2}+O(\|\delta\|^3)\\
	&=-\frac 12(\mathcal{H}_{f,\alpha,\varphi_0}\delta,\delta)+O(\|\delta\|^3).
\end{align*}
\end{proof}
 
In particular, for any $\psi_f$-ordered sequence $(\varphi_n)_{n \in \Z}\subset \mathcal{AM}_{f,\alpha}$, 
  $\mathcal{H}=\mathcal{H}_{f,\alpha,\varphi_0}$ is a quasi-periodic Schr\"odinger operator whose potential is related to the dynamics of $\psi_f|_{\mathcal{AM}_{f,\alpha}}$.  We denote by $\Sigma=\Sigma(\mathcal{H})$ its \textit{spectrum}; recall that it is the set of energies $E$ such that the operator $\mathcal{H}-E=\mathcal{H}-E \cdot \mathrm{I}$ does not have a bounded inverse in $\ell^2(\Z)$. 
For any $u \in \ell^2(\Z)$, we denote by $\mu_\mathcal{H}^u$ the spectral measure  of $\mathcal{H}$ associated to $u$. It is defined by the following formula:
$$
((\mathcal{H}-E)^{-1}u,u)=\int_{\R} \frac{1}{E'-E}d\mu_\mathcal{H}^u(E'),
$$
for any energy $E$ in the resolvent set  $\C\setminus\Sigma$. The union of the supports of all spectral measures is equal to $\Sigma$.    We refer for instance to \cite{Damanik} for more details on dynamically defined Schr\"odinger operators. 


\begin{lemma}\label{lemma spectre}
For any $u \in \ell^2(\Z)$, we have
\begin{equation}\label{new eq action2}
(\mathcal{H}u,u)\leq 0.
\end{equation}
Equivalently,
\begin{equation}\label{new eq action2 bibibis}
\quad \Sigma(\mathcal{H})\subset (-\infty,0].
\end{equation}
\end{lemma}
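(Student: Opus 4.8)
The plan is to exploit the fact that the minimizing sequence $(x_n)_{n\in\Z}$ realizes a genuine minimum of the action $A_{f,\alpha}$ among all compact perturbations, and to read off the sign of the Schr\"odinger quadratic form from this minimality. Concretely, I would first recall from the previous lemma that for any finitely supported $\delta=(\delta_n)_{n\in\Z}$ with $\|\delta\|\ll 1$,
$$
A_{f,\alpha}\bigl((x_n+\delta_n)_{n\in\Z}\bigr)-A_{f,\alpha}\bigl((x_n)_{n\in\Z}\bigr)=-\tfrac12(\mathcal{H}\delta,\delta)+O(\|\delta\|^3).
$$
Since $(x_n)_{n\in\Z}$ is action-minimizing (being an orbit inside $\mathcal{AM}_{f,\alpha}$, by Theorem \ref{theo aurby mather} and Lemma \ref{lemma rappel}), the left-hand side is $\geq 0$ for every such $\delta$. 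For a \emph{fixed} finitely supported $\delta$, replace $\delta$ by $t\delta$ with $t>0$ small: we get $-\tfrac{t^2}{2}(\mathcal{H}\delta,\delta)+O(t^3)\geq 0$, and dividing by $t^2$ and letting $t\to 0^+$ yields $(\mathcal{H}\delta,\delta)\leq 0$. This proves \eqref{new eq action2} for all finitely supported $\delta$.

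Next I would pass from finitely supported sequences to arbitrary $u\in\ell^2(\Z)$ by a density/continuity argument. The finitely supported sequences are dense in $\ell^2(\Z)$, and $\mathcal{H}$ is a bounded self-adjoint operator on $\ell^2(\Z)$ (the potential $V_0=-f'-2$ is bounded since $f$ is analytic on $\T$, hence $f'$ is bounded, and the off-diagonal part is the bounded shift), so $u\mapsto(\mathcal{H}u,u)$ is continuous on $\ell^2(\Z)$. Given $u\in\ell^2(\Z)$, choose finitely supported $\delta^{(k)}\to u$; then $(\mathcal{H}\delta^{(k)},\delta^{(k)})\to(\mathcal{H}u,u)$, and since each term is $\leq 0$, the limit is $\leq 0$. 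This establishes \eqref{new eq action2} in full.

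Finally, the equivalence with \eqref{new eq action2 bibibis} is the standard spectral-theoretic fact for bounded self-adjoint operators: $(\mathcal{H}u,u)\leq 0$ for all $u$ means $\mathcal{H}\leq 0$ in the operator order, which is equivalent to $\Sigma(\mathcal{H})\subset(-\infty,0]$ (one direction: if $E>0$ were in the spectrum, a Weyl sequence $u_k$ with $\|u_k\|=1$ and $\|(\mathcal{H}-E)u_k\|\to 0$ would give $(\mathcal{H}u_k,u_k)\to E>0$, a contradiction; the other direction is immediate from the spectral theorem). I do not expect any serious obstacle here; the only point requiring a little care is the order-of-limits issue in the variational step — one must fix $\delta$ first and then send the scaling parameter $t$ to zero, rather than trying to take $\delta\to 0$ directly with the $O(\|\delta\|^3)$ error uncontrolled in direction. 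The boundedness of $\mathcal{H}$, which legitimizes both the density argument and the spectral characterization, relies essentially on the analyticity (hence smoothness) of $f$; this is exactly the regime of the Main Theorem.
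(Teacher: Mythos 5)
Your proof is correct and uses the same core variational argument as the paper: exploit minimality of $(x_n)$, scale a finitely supported perturbation (your $t\delta$, the paper's $\frac{1}{k}u^{n_0}$) so the cubic remainder is suppressed relative to the quadratic term, and pass from finitely supported sequences to general $u\in\ell^2(\Z)$ by density and boundedness of $\mathcal{H}$. For the equivalence with $\Sigma(\mathcal{H})\subset(-\infty,0]$, you invoke Weyl sequences and the spectral theorem, whereas the paper runs a bare-hands Hilbert-space argument (the inequality $\|(\mathcal{H}-E)u\|\geq E\|u\|$ gives injectivity, a bounded inverse, and a closed dense range, and conversely the support of the spectral measures gives the sign of the quadratic form); both are routine and interchangeable.
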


\begin{proof}
	We first show \eqref{new eq action2}. 
Let $u \in \ell^2(\Z)$.  
For any integer $n_0\geq 1$, we define the compact sequence $u^{n_0}=(u^{n_0}_n)_{n \in \Z}$, where $u^{n_0}_n:=u_n$ if $n \in [-n_0,n_0]$, and $u_n^{n_0}:=0$ otherwise.  
As $(x_n)_{n \in \Z}$ is a minimizer of $A_{f,\alpha}$, for any integer $k \geq 1$, the following expression is always nonnegative:
\begin{equation*}
	A_{f,\alpha}\Big(\big(x_n+\frac 1k u_n^{n_0}\big)_{n\in \Z}\Big)-A_{f,\alpha}((x_n)_{n\in \Z})=-\frac {1}{2k^2}\big(\mathcal{H}u^{n_0},u^{n_0}\big)+O\Big(\frac{1}{k^3}\|u^{n_0}\|^3\Big).
\end{equation*}
For $k \gg 1$  large, and as the error term in the above Taylor expansion scales like $\frac{1}{k^3}$,  we deduce that $\big(\mathcal{H}u^{n_0},u^{n_0}\big) \leq 0$. Letting $n_0 \to +\infty$, we conclude that   $(\mathcal{H} u,u) \leq 0$.  




We now prove the second statement \eqref{new eq action2 bibibis} about the spectrum $\Sigma=\Sigma(\mathcal{H})$. It amounts to showing that for any $E>0$, the operator $\mathcal{H}-E$ is invertible and that its inverse is bounded. Fix $E>0$. By \eqref{new eq action2}, for any sequence $u \in \ell^2(\Z)$, we have 
\begin{equation}\label{inequ normes}
\|(\mathcal{H}-E)u\|^2=\|\mathcal{H}u\|^2-2E(\mathcal{H}u,u)+E^2\|u\|^2\geq E^2\|u\|^2.
\end{equation}
Therefore, if $(\mathcal{H}-E)u=0$, then  $\|u\|=0$, and thus, $\mathcal{H}-E$ is injective. Moreover, \eqref{inequ normes} also implies that the inverse of $\mathcal{H}-E$ is bounded, since 
\begin{equation*}\label{inequ normes deux}
\sup_{u \in \ell^2(\Z)}\frac{\|(\mathcal{H}-E)^{-1}u\|}{\|u\|}=\left(\inf_{u \in \ell^2(\Z)}\frac{\|(\mathcal{H}-E)u\|}{\|u\|}\right)^{-1}\leq \frac{1}{E}. 
\end{equation*}
Besides, $\mathcal{H}-E$ is automatically surjective. Indeed, it holds $\mathrm{Im}(\mathcal{H}-E)^{\perp}=\mathrm{Ker}(\mathcal{H}-E)^*=\mathrm{Ker}(\mathcal{H}-E)=\{0\}$, hence $\overline{\mathrm{Im}(\mathcal{H}-E)}=\ell^2(\Z)$. By \eqref{inequ normes} and Cauchy's convergence test, we deduce that $\mathrm{Im}(\mathcal{H}-E)=\ell^2(\Z)$. We conclude that $\Sigma\subset(-\infty,0]$. 

Conversely, let us show  that $\Sigma\subset(-\infty,0]$ implies \eqref{new eq action2}. Indeed, the spectrum $\Sigma$ is the union of the supports of the spectral measures $\{\mu_\mathcal{H}^u\}_{u \in \ell^2(\Z)}$, hence for any $u \in \ell^2(\Z)$, the support of $\mu_\mathcal{H}^u$ is contained in $(-\infty,0]$, and
$$
(\mathcal{H}u,u)=\int_\R E\, d\mu_\mathcal{H}^u(E)=\int_{-\infty}^0 E\, d\mu_\mathcal{H}^u(E)\leq 0.
$$
\end{proof}

\section{Main Theorem}\label{subs main res}

Let $f \in C_0^\omega(\T,\R)$, and let $\psi_f\colon \mathbb{A} \to \mathbb{A}$ be the associated twist map. 
Assume that $\psi_f$ leaves invariant an analytic curve $\Gamma$ 
which is the graph 
$\Gamma=\Gamma_\gamma:=\{(\varphi,\gamma(\varphi)): \varphi \in \T\}$ of some function $\gamma \in C^\omega(\T,\R)$. We denote by  
$$
g\colon \varphi \mapsto \varphi+\gamma(\varphi)+f(\varphi)\ \mathrm{mod}\ 1
$$ 
the  analytic diffeomorphism of $\T$ induced by $\psi_f|_{\Gamma_\gamma}$, and assume that the rotation number $\alpha$ of $g$ satisfies the \textit{Brjuno condition}, i.e.,
\begin{equation}\label{Brjuno condtion}
	\mathfrak{B}(\alpha):=\sum_{k=0}^{+\infty} \frac{1}{q_{k}} \log q_{k+1} < +\infty,
\end{equation}
where $\big(\frac{p_k}{q_k}\big)_{k \geq 0}$ denotes the sequence of 
convergents for $\alpha$ (corresponding to the continued fraction algorithm). 
In particular, we can use the results of  \cite{Avila1,AvilaJito,LYZZ}, 
where it is assumed that $\beta(\alpha):=\limsup_{k} \frac{1}{q_{k}} \log q_{k+1}=0$.  


As above, given $\varphi_0 \in \T$, we consider the  Schr\"odinger operator 
$$
\mathcal{H} \colon 
(u_n)_{n\in \Z} \mapsto (u_{n+1}+u_{n-1}+V_0(g^n(\varphi_0)) u_n)_{n\in \Z},
$$ 
with $V_0:=-f'-2$, 
and denote by $\Sigma(\mathcal{H})$ its spectrum. 

Our main result is:

\begin{thmh}\label{theo alpha first}
	Assume that the map $\psi_f$  leaves invariant an analytic  curve $\Gamma$ with Brjuno rotation number $\alpha$, and let $\mathcal{H}$ be the associated Schr\"odinger operator.  
	Then there exists $\varepsilon_0 >0$ such that the following properties hold:
	\begin{enumerate}
		\item  the energy $E=0$ is the right edge of the spectrum: $0=\max  \Sigma(\mathcal{H})$;
		\item  the spectral measures of $\mathcal{H}$ restricted to $[-\varepsilon_0,0]$ are absolutely continuous;
		\item there exists $\kappa>0$ such that 
		$| (E-\varepsilon,E+\varepsilon) \cap \Sigma(\mathcal{H})| > \kappa\varepsilon$, for all energy $E\in \Sigma(\mathcal{H})\cap (-\varepsilon_0,0)$, and for all $0<\varepsilon<|E|$.
	\end{enumerate}
\end{thmh}

The proof is based on the reducibility of the \textit{Schr\"odinger cocycle} associated to  $\mathcal{H}$ for the energy $E=0$. As we shall explain, this can be seen in two ways:
\begin{enumerate}
	\item in restriction to $\Gamma$, the \textit{Jacobi (differential) cocycle} of the twist map $\psi_f$ is conjugate to some quasi-periodic Schr\"odinger cocycle (see Subsections \ref{subs jacobi}-\ref{schr coc}). Besides, in Subsection \ref{subs parabbbo}, we  use a vector field tangent  to $\Gamma$ to reduce these cocycles to a constant cocycle associated to some parabolic matrix. 
	\item the energy $E=0$ is in the pure point spectrum of some \textit{dual Schr\"odinger operator} (see Subsection \ref{peure poinnt}). 
	Actually, thanks to the existence of the invariant curve $\Gamma$, we construct an explicit eigenvector   whose coefficients decay exponentially  fast. 
\end{enumerate} 
By Avila's results, this implies that for small energies, the corresponding  Schr\"odinger cocycles are \textit{almost reducible}, i.e., they can be conjugated uniformly in some strip to a cocycle which is arbitrarly close to a constant. Finally, we follow the proof given by Avila in \cite{A1} to  show the existence of a  component of absolutely continuous spectrum near the energy $E=0$.

\begin{remark}
	$\bullet$ Such properties are typical of the regime of small analytic potentials (see \cite{A1,AvilaJito,Eliasson} for instance). Our result  replaces the usual smallness assumption with the geometric assumption on the existence of an analytic invariant curve. 
	
	$\bullet$  Let us also recall that for maps $\psi_f$ as above, the existence of analytic  invariant curves with a given Brjuno rotation number  is guaranteed by the main result of \cite{Gentile}, provided that the analytic norm of $f$ is sufficiently small. More precisely, by Theorem 1.1 in \cite{Gentile}, for any $f_0 \in C_0^\omega(\T,\R)$ and for any $\alpha\in \R$ satisfying the Brjuno condition $\mathfrak{B}(\alpha)<+\infty$, there exists $\lambda_0>0$ such that for $|\lambda|<\lambda_0$, the map $\psi_{\lambda f_0}$ admits an analytic invariant curve  with rotation number $\alpha$. 
\end{remark}

\section{Invariant curves \& almost reducibility of Schr\"odinger cocycles}

\subsection{Invariant curves \& Jacobi (differential) cocycle}\label{subs jacobi}


Let $f \in C_0^\omega(\T,\R)$ be an analytic function with zero average. 
For the map $\psi=\psi_f$ one can define in a usual way the Jacobi cocycle $(\psi,D \psi)\colon \A \times \C^2 \to \A \times \C^2$, namely
$$
(\psi,D \psi)((\varphi,r),v):=(\psi(\varphi,r),D \psi_{(\varphi,r)}\cdot v), \quad \forall\, ((\varphi,r),v) \in \A \times \C^2.
$$

  Assume that  $\Gamma\subset \mathbb{A}$ is an invariant curve for $\psi$ homotopic to the base.  
 As recalled in Theorem \ref{theo aurby mather}, by Birkhoff Theorem, 
 $\Gamma$ is 
 the graph $\Gamma_\gamma=\{\Gamma_\gamma(\varphi)=(\varphi,\gamma(\varphi)):\varphi \in \T\}$ of some Lipschitz function $\gamma \colon \T  \to \R$.  
 As above, we let
 $
 g=g_\gamma\colon \varphi \mapsto \varphi+\gamma(\varphi)+f(\varphi) \ \mathrm{mod}\  1
 $ 
 be the circle map obtained by projecting $\psi|_{\Gamma_\gamma}$ on the first coordinate. For any $\varphi \in \T$, we have 
 \begin{equation}\label{equation graphe}
 \psi (\Gamma_\gamma(\varphi))=\psi (\varphi,\gamma(\varphi))=(g(\varphi),\gamma(\varphi)+f(\varphi))=(g(\varphi),\gamma \circ g(\varphi))\in \Gamma_\gamma. 
 \end{equation}
 Clearly, $g$ is a homeomorphism of $\T$; moreover, $g$ and $\gamma$ have the same regularity. In particular, in the case we consider, $\gamma$ and $g$ are analytic. 
 
As the curve $\Gamma$ is invariant under $\psi_f$, the restriction of the Jacobi cocycle to $\Gamma$ reduces to the  derivative cocycle $(g,D\psi)\colon \T \times \C^2 \to \T \times \C^2$: 
\begin{equation}\label{def cocycle}
(g,D\psi)(\varphi,v):=(g(\varphi),D\psi(\varphi)\cdot v),\quad \forall\, (\varphi,v) \in \T \times \C^2,
\end{equation}
where $D\psi(\varphi):=D\psi_{(\varphi,\gamma(\varphi))}$.  
Let us  denote by $v_0$ a vector field tangent to the invariant curve: 
$$
v_0\colon \varphi\mapsto\begin{pmatrix} 1\\
\gamma'(\varphi)
\end{pmatrix}\in T_{(\varphi,\gamma(\varphi))}\Gamma.
$$
It is easy to see that $v_0$ is an invariant  section for the derivative cocycle in the directional sense.
\begin{lemma}\label{lemma un}
The action of the cocycle $(g,D\psi)$ on the vector field $v_0$ is given by
\begin{equation}\label{inv section}
D\psi(\varphi) \cdot v_0(\varphi)=g'(\varphi)\cdot v_0(g(\varphi)),\quad \forall\, \varphi \in \T,
\end{equation}
with $g'(\varphi)=1+\gamma'(\varphi)+f'(\varphi)=(1-\gamma'\circ g(\varphi))^{-1}$.

Moreover, there exists an analytic conjugacy map $Z_1\in C^\omega (\T, \mathrm{SL}(2,\R))$  such that 
\begin{equation}\label{first conj}
(Z_1\circ g(\varphi))^{-1} D\psi(\varphi) Z_1(\varphi)=\begin{pmatrix}
g'(\varphi) & 1\\
0 & g'(\varphi)^{-1}
\end{pmatrix},\quad \forall\, \varphi \in \T.
\end{equation}
\end{lemma}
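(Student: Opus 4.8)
The plan is to verify the invariance relation \eqref{inv section} by a direct computation, then bootstrap it to the full normal form \eqref{first conj} by constructing $Z_1$ explicitly from the tangent vector field $v_0$ together with a suitably chosen ``second'' vector to complete a basis. First I would compute $D\psi(\varphi)\cdot v_0(\varphi)$ using the explicit matrix $D\psi_{(\varphi,r)}=\begin{pmatrix} 1+f'(\varphi) & 1 \\ f'(\varphi) & 1\end{pmatrix}$: applying it to $\transp{}{(1,\gamma'(\varphi))}$ gives $\transp{}{(1+f'(\varphi)+\gamma'(\varphi),\, f'(\varphi)+\gamma'(\varphi))}$. Since $g'(\varphi)=1+\gamma'(\varphi)+f'(\varphi)$, the first coordinate is exactly $g'(\varphi)$, and I must check the second coordinate equals $g'(\varphi)\gamma'(g(\varphi))$. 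This follows by differentiating the conjugacy identity $\gamma\circ g(\varphi)=\gamma(\varphi)+f(\varphi)$ from \eqref{equation graphe}, which yields $(\gamma'\circ g)(\varphi)\cdot g'(\varphi)=\gamma'(\varphi)+f'(\varphi)=g'(\varphi)-1$, i.e. $g'(\varphi)(\gamma'\circ g)(\varphi)=g'(\varphi)-1=f'(\varphi)+\gamma'(\varphi)$, which is precisely the second coordinate. The same identity rearranges to $g'(\varphi)^{-1}=1-(\gamma'\circ g)(\varphi)$, giving the stated alternative formula for $g'$.

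For the normal form, I would set $Z_1(\varphi)$ to be the $\mathrm{SL}(2,\R)$-valued map whose first column is $v_0(\varphi)=\transp{}{(1,\gamma'(\varphi))}$ and whose second column is a vector $w(\varphi)$ chosen so that $\det Z_1(\varphi)=1$; the natural choice is $w(\varphi)=\transp{}{(0,1)}$, since then $\det Z_1(\varphi)=1$ automatically and $Z_1\in C^\omega(\T,\mathrm{SL}(2,\R))$ because $\gamma$ is analytic. In this basis, \eqref{inv section} says the first column of $D\psi(\varphi)Z_1(\varphi)$ is $g'(\varphi)v_0(g(\varphi))=g'(\varphi)\cdot(\text{first column of }Z_1(g(\varphi)))$, so the first column of $(Z_1\circ g(\varphi))^{-1}D\psi(\varphi)Z_1(\varphi)$ is $\transp{}{(g'(\varphi),0)}$, matching \eqref{first conj}. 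It remains to compute the second column: one evaluates $D\psi(\varphi)\cdot w(\varphi)=D\psi(\varphi)\cdot\transp{}{(0,1)}=\transp{}{(1,1)}$ and expresses it in the basis $\{v_0(g(\varphi)),w(g(\varphi))\}$; writing $\transp{}{(1,1)}=a\,v_0(g(\varphi))+b\,w(g(\varphi))$ forces $a=1$ and $b=1-\gamma'(g(\varphi))=g'(\varphi)^{-1}$, which gives exactly the upper-triangular matrix in \eqref{first conj} with entries $g'(\varphi)$, $1$, $0$, $g'(\varphi)^{-1}$, as required. The determinant of this triangular matrix is $g'(\varphi)\cdot g'(\varphi)^{-1}=1$, consistent with everything being in $\mathrm{SL}(2,\R)$.

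The only point requiring a little care — and the main (mild) obstacle — is checking that the second-column computation genuinely produces $g'(\varphi)^{-1}$ rather than merely something of that form: this hinges again on the differentiated conjugacy relation $1-(\gamma'\circ g)(\varphi)=g'(\varphi)^{-1}$ established in the first part, so the two halves of the lemma are tightly linked and should be proved in that order. No almost-reducibility or KAM input is needed here; the lemma is a purely algebraic consequence of the invariance of the graph $\Gamma_\gamma$ and the explicit form of $D\psi$, and the analyticity of $Z_1$ is immediate from that of $\gamma$.
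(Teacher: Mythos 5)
Your proof is correct and follows essentially the same route as the paper's: you derive the same differentiated cohomological identity $g'(\varphi)\,(\gamma'\circ g)(\varphi)=\gamma'(\varphi)+f'(\varphi)$ (equivalently $g'(\varphi)^{-1}=1-\gamma'\circ g(\varphi)$), verify \eqref{inv section} by direct matrix multiplication, and construct the identical conjugacy $Z_1(\varphi)=\begin{pmatrix}1 & 0\\ \gamma'(\varphi) & 1\end{pmatrix}$. If anything, your column-by-column verification of \eqref{first conj} is more explicit than the paper's brief assertion that the conjugacy works.
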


\begin{proof}
As we have seen in \eqref{equation graphe}, the fact that $\Gamma$ is invariant means that $f$ can be written as a coboundary, i.e., it satisfies the cohomological equation
\begin{equation}\label{cohomological eq}
f(\varphi)=\gamma\circ g(\varphi)-\gamma(\varphi),\quad \forall\, \varphi \in \T.
\end{equation}
Differentiating with respect to $\varphi$ in \eqref{cohomological eq}, we obtain: 
\begin{equation}\label{gamma' inv}
 \gamma'\circ g(\varphi)\cdot g'(\varphi) =\gamma'(\varphi)+f'(\varphi),
\end{equation}
with $g'(\varphi)=1+\gamma'(\varphi)+f'(\varphi)$. It also follows that $g'(\varphi)\cdot (1-\gamma'\circ g(\varphi))=1$. 

By  \eqref{gamma' inv}, we deduce that
$$
\begin{pmatrix}
1+f'(\varphi) & 1\\
f'(\varphi) & 1
\end{pmatrix}\cdot \begin{pmatrix} 1\\
\gamma'(\varphi)
\end{pmatrix}=g'(\varphi)\begin{pmatrix} 1\\
\gamma'\circ g(\varphi)
\end{pmatrix},
$$
i.e., $D\psi(\varphi) \cdot v_0(\varphi)=g'(\varphi) \cdot v_0(g(\varphi))$.  

Now, let us set
$$
Z_1(\varphi):=\begin{pmatrix}
1 & 0\\
\gamma'(\varphi) & 1
\end{pmatrix},\quad \forall\, \varphi \in \T.
$$ 
Then, by \eqref{inv section}, and since $ (g'(\varphi))^{-1}=1-\gamma'\circ g(\varphi)$, we see that the map $Z_1$ conjugates $(g,D\psi)$ to the 
upper-triangular cocycle as in  \eqref{first conj}. 
\end{proof}

\subsection{Schr\"odinger cocycle associated to an invariant curve}\label{schr coc}

Let $f$, $\psi=\psi_f$, $\Gamma$ and $g$ be as in the previous subsection. As we have seen above, one can define a natural family of dynamically generated Schr\"odinger operators, the phase $\varphi_0 \in \T$ being a parameter:
$$
\mathcal{H}_{f,\alpha,\varphi_0}\colon 
(u_n)_{n\in \Z} \mapsto (u_{n+1}+u_{n-1}+V_0(g^n(\varphi_0)) u_n)_{n\in \Z},
$$ 
where $V_0:=-f'-2$. For an arbitrary energy $E \in \R$, one can define a   Schr\"odinger cocycle  $(g,S_E^{V_0})$, with
$$
S_E^{V_0}(\varphi):=\begin{pmatrix}
E-V_0(\varphi) & -1\\
1 & 0
\end{pmatrix},\quad \forall\, \varphi \in \T.
$$
It acts on $\T \times \C^2$ by the following formula:
\begin{equation*}
	(g,S_E^{V_0})(\varphi,v):=(g(\varphi),S_E^{V_0}(\varphi)\cdot v),\quad \forall\, (\varphi,v) \in \T \times \C^2. 
\end{equation*}  
It turns out that the Schr\"odinger cocycle and the derivative cocycle are conjugate to each other for the energy $E=0$: for any $\varphi \in \T$, we have
\begin{equation}\label{identite matrice schrodd}
\begin{pmatrix}
1 & 0\\
1 & -1
\end{pmatrix}\begin{pmatrix}
1+f'(\varphi) & 1\\
f'(\varphi) & 1
\end{pmatrix}
\begin{pmatrix}
1 & 0\\
1 & -1
\end{pmatrix}=
\begin{pmatrix}
2+f'(\varphi) & -1\\
1 & 0
\end{pmatrix}.
\end{equation}

We next show that both cocycles $(g,S_0^{V_0})$ and $(g,D\psi)$ are conjugate to a constant parabolic cocycle $(g,B_0)$, for some matrix $B_0:=\begin{pmatrix}
1 & \nu_0\\
0 & 1
\end{pmatrix}$. This conjugation holds only along the invariant curve. To proceed, we need some facts from the Herman-Yoccoz theory.  

Assume that 
the rotation number $\alpha$ of $\Gamma$ satisfies the Brjuno condition 
$
\mathfrak{B}(\alpha):=\sum_{k=0}^{+\infty} \frac{1}{q_{k}} \log q_{k+1} < +\infty
$,
where $\big(\frac{p_k}{q_k}\big)_{k \geq 0}$ denotes the sequence of 
continued fraction convergents for $\alpha$. 
We shall also assume that $\Gamma=\Gamma_\gamma$ is  the graph  of some analytic function $\gamma\in C^\omega( \T, \R)$. 
It follows that the circle diffeomorphism $
g=g_\gamma$ 
    is also analytic. Hence, by the theorem of Yoccoz  (see  \cite{Yoccoz} and also \cite{EKMY,ElFaKr} for a reference), $g$ is analytically conjugate to the rigid rotation $r_\alpha$ by angle $\alpha$. Namely, there exists an analytic circle diffeomorphism $\phi =\phi_\gamma\in C^\omega(\T,\T)$ such that
\begin{equation}\label{def conjugacy phi}
\phi^{-1} \circ g  \circ \phi(\varphi)=\varphi+\alpha \ \mathrm{mod}\ 1,\quad \forall\, \varphi \in \T. 
\end{equation}

The following lemma says that both $f$ and $\gamma$ can be expressed in terms of the conjugacy map $\phi$. 

\begin{lemma}\label{lemma conjugajud}
	It holds
\begin{equation}\label{f vareps}
f =\gamma \circ g - \gamma=\gamma \circ \phi r_\alpha \phi^{-1}- \gamma.
\end{equation} 
Moreover, we have 
\begin{equation}\label{deq gammma}
\gamma=\mathrm{I}-g^{-1}=\mathrm{I}-\phi r_\alpha^{-1} \phi^{-1},
\end{equation}
where $\mathrm{I}$ denotes the identity map, 
which implies that
\begin{equation}\label{deq fgammama}
f=g-2 \mathrm{I}+g^{-1}=\phi r_\alpha \phi^{-1} - 2 \mathrm{I} + \phi r_\alpha^{-1} \phi^{-1}.
\end{equation}
\end{lemma}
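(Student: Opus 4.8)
The plan is to read off all three identities from the two structural facts already established in the excerpt: the cohomological equation \eqref{cohomological eq}, namely $f = \gamma\circ g - \gamma$, and the conjugacy relation \eqref{def conjugacy phi}, namely $g = \phi\, r_\alpha\, \phi^{-1}$ (equivalently $\phi^{-1}\circ g\circ \phi = r_\alpha$). Substituting the second into the first immediately gives \eqref{f vareps}: $f = \gamma\circ g - \gamma = \gamma\circ \phi r_\alpha \phi^{-1} - \gamma$. Nothing more is needed for the first displayed equation.

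For \eqref{deq gammma}, I would recall from Lemma \ref{lemma un} (see the relation $g'(\varphi)\cdot(1-\gamma'\circ g(\varphi))=1$, and already at the level of the maps themselves from \eqref{equation graphe}) that the definition $g(\varphi) = \varphi + \gamma(\varphi) + f(\varphi)$ together with $f = \gamma\circ g - \gamma$ yields $g(\varphi) = \varphi + \gamma\circ g(\varphi)$, i.e. $\mathrm{I} = g^{-1} + \gamma\circ g\circ g^{-1}$ after precomposing with $g^{-1}$; but more directly, $g = \mathrm{I} + \gamma\circ g$ rearranges to $\gamma\circ g = g - \mathrm{I}$, hence $\gamma = (g-\mathrm{I})\circ g^{-1} = \mathrm{I}\circ g^{-1} - g^{-1}\circ\!\!\ \ldots$ — more cleanly: $\gamma = g\circ g^{-1} - g^{-1} \cdot(\text{shift})$; precisely, writing $\gamma\circ g = g - \mathrm{I}$ and composing on the right with $g^{-1}$ gives $\gamma = \mathrm{I} - g^{-1}$, where $\mathrm{I}$ is the identity. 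Then substituting $g^{-1} = \phi\, r_\alpha^{-1}\,\phi^{-1}$ (the inverse of $g = \phi r_\alpha \phi^{-1}$) gives $\gamma = \mathrm{I} - \phi\, r_\alpha^{-1}\,\phi^{-1}$, which is \eqref{deq gammma}.

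Finally, \eqref{deq fgammama} follows by combining the previous two: from $f = \gamma\circ g - \gamma$ and $\gamma = \mathrm{I} - g^{-1}$ we get $\gamma\circ g = g - \mathrm{I}$ (as above), so $f = (g - \mathrm{I}) - (\mathrm{I} - g^{-1}) = g - 2\,\mathrm{I} + g^{-1}$, and then $g = \phi r_\alpha \phi^{-1}$, $g^{-1} = \phi r_\alpha^{-1}\phi^{-1}$ give the stated expression in terms of $\phi$. The one genuinely delicate point — really the only thing to be careful about — is the bookkeeping of composition versus evaluation: all these are identities between circle maps (lifts), so I would fix lifts once and for all and check that "$\gamma\circ g = g - \mathrm{I}$'' holds as an identity of real-valued functions on $\R$ (it does, since both sides equal $\gamma\circ g$ by the definition $g = \mathrm{I} + \gamma\circ g$, which is itself just a rewriting of $g = \mathrm{I} + \gamma + f$ using the cohomological equation). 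No serious obstacle is expected; the lemma is a formal consequence of \eqref{cohomological eq} and \eqref{def conjugacy phi}.
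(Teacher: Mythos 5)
Your proof is correct and follows essentially the same route as the paper: derive $g = \mathrm{I} + \gamma\circ g$ from the invariance of the graph (equivalently, from the definition of $g$ and the cohomological equation), precompose with $g^{-1}$ to get $\gamma = \mathrm{I} - g^{-1}$, and combine with $f = \gamma\circ g - \gamma$ and $g = \phi r_\alpha \phi^{-1}$ to obtain all three identities. The only stylistic difference is that the paper states $g(\varphi) = \varphi + \gamma(g(\varphi))$ directly as a consequence of \eqref{equation graphe}, while you rederive it; the messy intermediate rewriting in your middle paragraph is harmless since you then give the clean version.
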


\begin{proof}
The identity in \eqref{f vareps} follows directly  from  \eqref{cohomological eq}. 
As $g=g_\gamma$ is the circle diffeomorphism induced by $\psi_f|_{\Gamma_\gamma}$, for all $\varphi \in \T$, we have $g(\varphi)=\varphi+\gamma(g(\varphi))$, which implies $\gamma=\mathrm{I}-g^{-1}$, and hence, \eqref{deq gammma}. Finally,  \eqref{deq fgammama} follows from \eqref{f vareps} and \eqref{deq gammma}. 
\end{proof}

Conversely, given an irrational frequency $\tilde \alpha$ and an analytic circle diffeomorphism $\tilde \phi$, one can produce an analytic function $\tilde f$ with zero average such that the associated twist map $\psi_{\tilde f}$ has an analytic invariant  curve with rotation number $\tilde \alpha$. Moreover, $\tilde \phi$ conjugates the corresponding circle diffeomorphism $\tilde g$ to the rigid rotation $r_{\tilde \alpha}$. 
\begin{lemma}\label{prop beta}
	Let $\tilde \alpha\in \R\setminus\Q$ 
	and let $\tilde \phi \in C^\omega(\T,\T)$  be an analytic circle diffeomorphism.  We define two   functions $
	\tilde \gamma=\tilde \gamma_{\tilde \alpha,\tilde \phi}\in C^\omega(\T,\R)$ and $\tilde f=\tilde f_{\tilde \alpha,\tilde \phi} 
	\in C_0^\omega(\T,\R)$:
	\begin{align*}
	\tilde \gamma&:=\mathrm{I}-\tilde \phi r_{\tilde \alpha}^{-1}\tilde  \phi^{-1},\\
	\tilde f&:=\tilde \gamma\circ\tilde  \phi r_{\tilde \alpha}\tilde \phi^{-1}-\tilde \gamma=\tilde \phi r_{\tilde \alpha}\tilde   \phi^{-1}-2\mathrm{I}+\tilde \phi r_{\tilde \alpha}^{-1} \tilde \phi^{-1}.
	\end{align*}
	Then, 
	the analytic graph $\Gamma_{\tilde \gamma}:=\{\Gamma_{\tilde \gamma}(\varphi)=(\varphi,\tilde \gamma(\varphi)): \varphi \in \T\}$ is invariant under $ \psi_{\tilde f}$ with a  rotation number $\tilde \alpha$, and $\tilde \phi^{-1} \circ \tilde g \circ \phi= r_{\tilde \alpha}$. 
\end{lemma}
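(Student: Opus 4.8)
The plan is to verify directly that the functions $\tilde\gamma$ and $\tilde f$ defined in the statement satisfy the cohomological relation \eqref{cohomological eq} with $\tilde g:=\tilde\phi r_{\tilde\alpha}\tilde\phi^{-1}$ playing the role of $g$, and then to recognize $\tilde g$ as exactly the circle map induced by $\psi_{\tilde f}$ restricted to the graph $\Gamma_{\tilde\gamma}$. First I would observe that $\tilde g$ is a well-defined analytic circle diffeomorphism of $\T$ with rotation number $\tilde\alpha$, being analytically conjugate to the rigid rotation $r_{\tilde\alpha}$ via $\tilde\phi$; in particular $\tilde g$ is a homeomorphism homotopic to the identity, so $\tilde g=\mathrm{I}+\eta$ for some $1$-periodic analytic $\eta$, and $\tilde f$ as defined is $1$-periodic and analytic. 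The zero-average property of $\tilde f$ follows from the fact that $\tilde f=\tilde\gamma\circ\tilde g-\tilde\gamma$ is a coboundary over the uniquely ergodic map $\tilde g$, hence integrates to zero against the invariant measure; alternatively, writing $\tilde f=\tilde g-2\,\mathrm I+\tilde g^{-1}$ one checks the displacement functions of $\tilde g$ and $\tilde g^{-1}$ have opposite averages since both equal $\tilde\alpha$ modulo the lift normalization—either way this is the routine check asserted by the membership $\tilde f\in C_0^\omega(\T,\R)$ in the statement, so I would only remark on it.

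The core of the argument is the invariance of $\Gamma_{\tilde\gamma}$. I would compute $\psi_{\tilde f}(\varphi,\tilde\gamma(\varphi))=(\varphi+\tilde\gamma(\varphi)+\tilde f(\varphi),\ \tilde\gamma(\varphi)+\tilde f(\varphi))$ and check two things: that the first coordinate equals $\tilde g(\varphi)$, and that the second coordinate equals $\tilde\gamma(\tilde g(\varphi))$. The second of these is exactly the defining relation $\tilde f=\tilde\gamma\circ\tilde g-\tilde\gamma$, i.e. $\tilde\gamma(\varphi)+\tilde f(\varphi)=\tilde\gamma\circ\tilde g(\varphi)$. For the first coordinate, $\varphi+\tilde\gamma(\varphi)+\tilde f(\varphi)=\varphi+\tilde\gamma\circ\tilde g(\varphi)$, so it suffices to show $\varphi+\tilde\gamma\circ\tilde g(\varphi)=\tilde g(\varphi)$, equivalently $\tilde\gamma\circ\tilde g=\tilde g-\mathrm I$, equivalently $\tilde\gamma=\mathrm I-\tilde g^{-1}$. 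But this is precisely the definition $\tilde\gamma:=\mathrm I-\tilde\phi r_{\tilde\alpha}^{-1}\tilde\phi^{-1}=\mathrm I-\tilde g^{-1}$. Hence $\psi_{\tilde f}(\Gamma_{\tilde\gamma}(\varphi))=\Gamma_{\tilde\gamma}(\tilde g(\varphi))\in\Gamma_{\tilde\gamma}$, so the graph is invariant and the induced circle map is $\tilde g$, which by construction has rotation number $\tilde\alpha$ and satisfies $\tilde\phi^{-1}\circ\tilde g\circ\tilde\phi=r_{\tilde\alpha}$.

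The only genuine subtlety—and I view it as the main point to be careful about rather than a true obstacle—is a consistency check: the formula $\tilde f=\tilde\phi r_{\tilde\alpha}\tilde\phi^{-1}-2\,\mathrm I+\tilde\phi r_{\tilde\alpha}^{-1}\tilde\phi^{-1}$ must literally agree with $\tilde\gamma\circ\tilde g-\tilde\gamma$. Substituting $\tilde\gamma=\mathrm I-\tilde g^{-1}$ gives $\tilde\gamma\circ\tilde g-\tilde\gamma=(\tilde g-\mathrm I)-(\mathrm I-\tilde g^{-1})=\tilde g-2\,\mathrm I+\tilde g^{-1}$, which is the stated expression once $\tilde g=\tilde\phi r_{\tilde\alpha}\tilde\phi^{-1}$; so the two displayed formulas for $\tilde f$ are consistent. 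One should also note that all these identities are to be read on $\T=\R/\Z$ with the understanding that $\mathrm I$, $\tilde g$, etc., are maps of the circle and the arithmetic (subtraction of the identity, composition with $r_{\tilde\alpha}^{\pm1}$) is carried out on fixed lifts in a way compatible with the $1$-periodicity—this is the same bookkeeping already used around \eqref{deq gammma}–\eqref{deq fgammama}, so I would simply invoke it. With these checks in place the lemma follows; there is no hard estimate, the content being purely the algebraic reversal of Lemma \ref{lemma conjugajud}.
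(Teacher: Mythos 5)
The paper itself states Lemma \ref{prop beta} without proof, presenting it simply as the converse construction to Lemma \ref{lemma conjugajud}; your direct verification is therefore the argument the authors are silently invoking, and the core of it is correct. The computation that $\psi_{\tilde f}(\varphi,\tilde\gamma(\varphi))=(\tilde g(\varphi),\tilde\gamma\circ\tilde g(\varphi))$ — with the first coordinate reduced to $\tilde\gamma=\mathrm I-\tilde g^{-1}$ and the second to $\tilde f=\tilde\gamma\circ\tilde g-\tilde\gamma$ — is exactly right, and the observation that the two displayed formulas for $\tilde f$ are identical once $\tilde g=\tilde\phi r_{\tilde\alpha}\tilde\phi^{-1}$ is the right consistency check. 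The conjugacy $\tilde\phi^{-1}\circ\tilde g\circ\tilde\phi=r_{\tilde\alpha}$ and the rotation number are immediate from the definition of $\tilde g$.

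One small caveat on the point you flagged as routine: the membership $\tilde f\in C_0^\omega(\T,\R)$ requires zero \emph{Lebesgue} average, and neither of your two justifications quite delivers that as stated. Integrating the coboundary $\tilde\gamma\circ\tilde g-\tilde\gamma$ against the invariant measure of $\tilde g$ gives zero, but that measure is not Lebesgue in general. And the Lebesgue averages of the displacements $G-\mathrm I$ and $G^{-1}-\mathrm I$ are not ``both $\tilde\alpha$'' — the rotation number is a Birkhoff/orbit average, not a Lebesgue space average. The conclusion is nevertheless true: writing $G$ for a lift of $\tilde g$ and $G^{-1}$ for the lift of $\tilde g^{-1}$ with $G^{-1}\circ G=\mathrm{id}_\R$, one finds by the substitution $y=G(x)$ and an integration by parts that
\begin{equation*}
\int_0^1\bigl(G(x)-x\bigr)\,dx=\tfrac12-\int_0^1 G^{-1}(y)\,dy=-\int_0^1\bigl(G^{-1}(y)-y\bigr)\,dy,
\end{equation*}
so $\int_0^1\tilde f=0$ for purely algebraic reasons independent of the rotation number. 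With that correction the proof is complete.
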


Let $f,\psi,\Gamma=\Gamma_\gamma,g=g_\gamma$ and $\phi=\phi_\gamma$ be as previously. For any map $A \in C^\omega(\T,\mathrm{SL}(2,\R))$, we denote by $(\alpha,A)$ the associated cocycle over the rigid rotation by angle $\alpha$. It acts on 
 $\T \times \C^2$ as follows:
\begin{equation*}
	(\alpha,A)(\varphi,v):=(\varphi+\alpha,A(\varphi)\cdot v),\quad \forall\, (\varphi,v) \in \T \times \C^2. 
\end{equation*}  
After conjugation by $\phi$, the derivative cocycle and the Schr\"odinger  cocycle yield  cocycles $(\alpha,D\psi\circ \phi)$ and $(\alpha,S_E^{V})$, $E \in \R$, where $V:=V_0 \circ \phi=-f' \circ \phi-2\in C^\omega(\T,\R)$. 
Besides, according to \eqref{identite matrice schrodd}, these cocycles are conjugated by the matrix $M :=\begin{pmatrix}
1 & 0 \\
1 & -1
\end{pmatrix}=M^{-1} \in \mathrm{GL}(2,\R)$ for the energy $E=0$:
\begin{equation}\label{link schr}
M^{-1} \cdot D\psi(\phi(\varphi)) \cdot M=S_0^V(\varphi),\quad \forall\, \varphi\in \T.
\end{equation}

The Schr\"odinger cocycles $\{(\alpha,S_E^{V})\}_{E \in \R}$ are associated to the family  of Schr\"odinger operators $\{H_{V,\alpha,\varphi_0}\}_{\varphi_0 \in \T}$  over  the dynamics of the rigid rotation $r_\alpha$: 
$$
H_{V,\alpha,\varphi_0}\colon 
(u_n)_{n\in \Z} \mapsto (u_{n+1}+u_{n-1}+V(\varphi_0+n\alpha) u_n)_{n\in \Z}.
$$
The spectrum $\Sigma(H_{V,\alpha,\varphi_0})$ does not depend on the phase $\varphi_0$. Recall that  $\Gamma=\mathcal{AM}_{f,\alpha}$, and note that $H_{V,\alpha,\phi^{-1}(\varphi_0)}
=\mathcal{H}_{f,\alpha,\varphi_0}$. In particular,  $\Sigma(H_{V,\alpha,\varphi_0})=\Sigma(\mathcal{H}_{f,\alpha,\varphi_0})$.

\subsection{Parabolic reducibility of the Schr\"odinger cocycle}\label{subs parabbbo}

We keep the notations of the previous subsection.
As a consequence of Lemma \ref{lemma un}, we obtain:
\begin{corollary}
Let $Z_2:=M \cdot Z_1\circ \phi\in C^\omega(\T,\mathrm{GL}(2,\R))$. Then, we have
\begin{equation}\label{first conj bbis}
Z_2(\varphi+\alpha)^{-1} S_0^{V} (\varphi) Z_2(\varphi)=\begin{pmatrix}
\kappa(\varphi) & 1\\
0 & \kappa(\varphi)^{-1}
\end{pmatrix},\quad \forall\, \varphi \in \T,
\end{equation}
with $\kappa(\varphi):=\frac{\phi'(\varphi+\alpha)}{\phi'(\varphi)}$. 
\end{corollary}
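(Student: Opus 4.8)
The plan is to obtain \eqref{first conj bbis} by composing three conjugacies that are already at hand: the upper-triangular normal form \eqref{first conj} for the derivative cocycle $(g,D\psi)$ furnished by Lemma \ref{lemma un}; the identity \eqref{link schr}, $S_0^{V}(\varphi)=M^{-1}D\psi(\phi(\varphi))M$, which turns the Schr\"odinger cocycle at $E=0$ into the derivative cocycle read through the change of variables $\phi$; and the linearization \eqref{def conjugacy phi}, $g\circ\phi(\varphi)=\phi(\varphi+\alpha)$, which records how the base change $\phi$ intertwines $g$ with the rotation $r_\alpha$. Since $M=M^{-1}$ and $\phi$ is an analytic circle diffeomorphism, the composite conjugacy $Z_2=M\cdot Z_1\circ\phi$ automatically lies in $C^\omega(\T,\mathrm{GL}(2,\R))$, so only the algebraic identity needs verification.

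Concretely, I would expand $Z_2(\varphi+\alpha)^{-1}S_0^{V}(\varphi)Z_2(\varphi)$ by substituting the definitions. Using $Z_2(\varphi)=MZ_1(\phi(\varphi))$, $Z_2(\varphi+\alpha)=MZ_1(\phi(\varphi+\alpha))$ and $S_0^{V}(\varphi)=M^{-1}D\psi(\phi(\varphi))M$, the product becomes
\[
Z_1(\phi(\varphi+\alpha))^{-1}M^{-1}M^{-1}\,D\psi(\phi(\varphi))\,M M\,Z_1(\phi(\varphi))=Z_1(\phi(\varphi+\alpha))^{-1}D\psi(\phi(\varphi))Z_1(\phi(\varphi)),
\]
where the two central factors of $M$ on each side cancel because $M^2=\mathrm{I}$. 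By \eqref{def conjugacy phi} we have $\phi(\varphi+\alpha)=g(\phi(\varphi))$, hence $Z_1(\phi(\varphi+\alpha))=(Z_1\circ g)(\phi(\varphi))$; so the expression is exactly the left-hand side of \eqref{first conj} evaluated at the phase $\phi(\varphi)$, and Lemma \ref{lemma un} gives
\[
Z_2(\varphi+\alpha)^{-1}S_0^{V}(\varphi)Z_2(\varphi)=\begin{pmatrix} g'(\phi(\varphi)) & 1\\ 0 & g'(\phi(\varphi))^{-1}\end{pmatrix},\quad\forall\,\varphi\in\T.
\]

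It then remains to identify $g'(\phi(\varphi))$ with $\kappa(\varphi)=\phi'(\varphi+\alpha)/\phi'(\varphi)$: differentiating $g\circ\phi=\phi\circ r_\alpha$ at $\varphi$ yields $g'(\phi(\varphi))\,\phi'(\varphi)=\phi'(\varphi+\alpha)$, which is precisely this relation. I do not expect any genuine obstacle here; the corollary is a bookkeeping consequence of Lemma \ref{lemma un} combined with \eqref{link schr} and \eqref{def conjugacy phi}. The only points deserving a moment of care are the involutivity $M^2=\mathrm{I}$ used in the cancellation, and the fact that \eqref{first conj} must be invoked at the shifted phase $\phi(\varphi)$ rather than at $\varphi$.
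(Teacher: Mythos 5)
Your argument is correct and matches the paper's own proof: both rely on cancelling the involutive matrix $M$ using \eqref{link schr}, invoking \eqref{first conj} at the shifted phase $\phi(\varphi)$, and identifying $g'\circ\phi=\kappa$ by differentiating the conjugacy relation $g\circ\phi=\phi\circ r_\alpha$. The paper merely presents the same steps in the reverse order (first pre-composing $Z_1$ with $\phi$, then conjugating by $M$), which is a cosmetic difference.
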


\begin{proof}
Recall that $\phi=\phi_\gamma$ satisfies $g \circ \phi(\varphi)=\phi(\varphi+\alpha)$, for all $\varphi \in \T$. Therefore, $g'\circ \phi(\varphi)=\frac{\phi'(\varphi+\alpha)}{\phi'(\varphi)}=:\kappa(\varphi)$. 
Set $\widetilde{Z}_1:=Z_1 \circ \phi$. By \eqref{first conj}, 
we deduce that 
\begin{equation*}
\widetilde{Z}_1(\varphi+\alpha)^{-1} D\psi(\phi(\varphi)) \widetilde{Z}_1(\varphi)=\begin{pmatrix}
\kappa(\varphi) & 1\\
0 & \kappa(\varphi)^{-1}
\end{pmatrix},\quad \forall\, \varphi \in \T.
\end{equation*}
Set $Z_2:=M \cdot \widetilde{Z}_1=M \cdot Z_1 \circ \phi$. By  \eqref{link schr}, 
we thus conclude that
\begin{equation*}
Z_2(\varphi+\alpha)^{-1} S_0^{V} (\varphi) Z_2(\varphi)=\begin{pmatrix}
\kappa(\varphi) & 1\\
0 & \kappa(\varphi)^{-1}
\end{pmatrix},\quad \forall\, \varphi \in \T.
\end{equation*} 
\end{proof}

 As a consequence of the above result, we show that for the energy $E=0$, the Schr\"odinger cocycle $(\alpha,S_0^{V})$ can be reduced to a parabolic cocycle. 

\begin{prop}\label{prop reductibilite parab}
There exist a negative number $\nu_0<0$ and an analytic conjugacy $Z \in C^\omega( \T, \mathrm{SL}(2,\R))$ homotopic to the identity such that 
$$
Z(\varphi+\alpha)^{-1}S_0^{V}(\varphi) Z(\varphi)=B_0:=\begin{pmatrix}
1 & \nu_0\\
0 & 1
\end{pmatrix},\quad \forall\, \varphi\in \T.
$$
\end{prop}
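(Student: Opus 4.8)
The plan is to obtain $Z$ as a product of three explicit analytic conjugacies of triangular type, followed by one constant matrix that corrects the determinant and the sign of $\nu_0$. The starting point is \eqref{first conj bbis}, which already puts $S_0^V$ in upper-triangular form with diagonal entries $\kappa(\varphi)^{\pm1}$, where $\kappa(\varphi)=\phi'(\varphi+\alpha)/\phi'(\varphi)$. The crucial remark is that $\kappa$ is a \emph{multiplicative coboundary} over $r_\alpha$ with the explicit transfer function $\phi'$ (which is $1$-periodic and strictly positive, since $\phi$ conjugates $g$ to $r_\alpha$ and is therefore an orientation-preserving analytic circle diffeomorphism). Hence conjugating by the analytic loop $D(\varphi):=\mathrm{diag}\bigl(\phi'(\varphi),\phi'(\varphi)^{-1}\bigr)\in C^\omega(\T,\mathrm{SL}(2,\R))$ kills the diagonal and produces the parabolic cocycle $\varphi\mapsto \left(\begin{smallmatrix}1 & b(\varphi)\\ 0 & 1\end{smallmatrix}\right)$, with $b(\varphi)=\bigl(\phi'(\varphi)\phi'(\varphi+\alpha)\bigr)^{-1}>0$.

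Next I make the upper-right entry constant. Setting $\nu:=\int_{\T}b\,d\varphi>0$, this reduces to solving the cohomological equation $w(\varphi+\alpha)-w(\varphi)=b(\varphi)-\nu$ for an analytic, real-valued $w\colon\T\to\R$; conjugating further by $W(\varphi):=\left(\begin{smallmatrix}1 & w(\varphi)\\ 0 & 1\end{smallmatrix}\right)\in C^\omega(\T,\mathrm{SL}(2,\R))$ then yields the constant cocycle $\left(\begin{smallmatrix}1 & \nu\\ 0 & 1\end{smallmatrix}\right)$. This is the one genuinely non-algebraic step, and it is exactly where the arithmetic assumption enters: the right-hand side $b-\nu$ is analytic with zero average, and since $\alpha$ is of Brjuno type (in particular $\beta(\alpha)=0$), the standard small-divisor estimate provides an analytic solution $w$ on a slightly smaller strip. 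I expect this to be the main (indeed the only) real obstacle; the remaining manipulations are bookkeeping with triangular matrices. (One could, if preferred, merge these two steps into a single conjugacy by the upper-triangular loop $\left(\begin{smallmatrix}\phi'(\varphi) & \phi'(\varphi)w(\varphi)\\ 0 & \phi'(\varphi)^{-1}\end{smallmatrix}\right)$.)

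It remains to correct the determinant and pin down the homotopy class. The conjugacy accumulated so far is $Z_2\,D\,W$, whose determinant is $\equiv-1$ because $Z_2=M\,(Z_1\circ\phi)$ and $\det M=-1$. I therefore post-compose with the constant matrix $C_0:=\mathrm{diag}(1,-1)$; since $C_0^{-1}\left(\begin{smallmatrix}1 & \nu\\ 0 & 1\end{smallmatrix}\right)C_0=\left(\begin{smallmatrix}1 & -\nu\\ 0 & 1\end{smallmatrix}\right)$, the map $Z:=Z_2\,D\,W\,C_0\in C^\omega(\T,\mathrm{SL}(2,\R))$ conjugates $S_0^V$ to $B_0$ with $\nu_0:=-\nu=-\int_{\T}\bigl(\phi'(\varphi)\phi'(\varphi+\alpha)\bigr)^{-1}d\varphi<0$; note that it is precisely this determinant correction that makes $\nu_0$ negative, in agreement with $E=0$ being the right edge of the spectrum. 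Finally, each of the analytic loops $Z_1\circ\phi$ (lower unipotent), $D$ (positive diagonal, using $\phi'>0$), and $W$ (upper unipotent) can be deformed to the constant $I$ within $\mathrm{SL}(2,\R)$, so $Z$ is homotopic to the constant loop $MC_0=\left(\begin{smallmatrix}1 & 0\\ 1 & 1\end{smallmatrix}\right)$, which, being unipotent, is null-homotopic in the connected group $\mathrm{SL}(2,\R)$. Hence $Z$ is homotopic to the identity, which completes the argument.
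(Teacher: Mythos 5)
Your proof is correct and follows essentially the same route as the paper's: reduce to the upper‑triangular cocycle of \eqref{first conj bbis}, kill the diagonal $\kappa^{\pm1}$ by conjugating with the diagonal loop built from the transfer function $\phi'$, then make the remaining upper‑right entry constant by solving the additive cohomological equation over $r_\alpha$ (Brjuno condition). The only cosmetic difference is that the paper absorbs the sign into the diagonal conjugacy $\mathrm{diag}(\phi',-(\phi')^{-1})$ rather than appending a separate constant factor $\mathrm{diag}(1,-1)$, and the paper deduces the homotopy class from the nonvanishing of $\phi'=Z_{11}$ rather than factor‑by‑factor.
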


\begin{proof}
For any $\varphi \in \T$, and for $\kappa(\varphi)=\frac{\phi'(\varphi+\alpha)}{\phi'(\varphi)}$ as in \eqref{first conj bbis}, we see that
$$
\begin{pmatrix}
\phi'(\varphi+\alpha) & 0 \\
0 & -\phi'(\varphi+\alpha)^{-1}
\end{pmatrix}
\begin{pmatrix}
1 & \nu(\varphi) \\
0 & 1
\end{pmatrix}
\begin{pmatrix}
\phi'(\varphi)^{-1} & 0 \\
0 & -\phi'(\varphi)
\end{pmatrix}=\begin{pmatrix}
\kappa(\varphi) & 1\\
0 & \kappa(\varphi)^{-1}
\end{pmatrix},
$$
with $\nu(\varphi):= -(\phi'(\varphi)\phi'(\varphi+\alpha))^{-1}<0$.
  
For all $\varphi\in \T$, we let  $Z_3(\varphi):=Z_2(\varphi) \cdot \mathrm{diag}(\phi'(\varphi),-(\phi')^{-1}(\varphi))$, such that $Z_3 \in C^\omega(\T, \mathrm{SL}(2,\R))$. By  \eqref{first conj bbis}, we thus get
$$
Z_3(\varphi+\alpha)^{-1} S_0^{V}(\varphi) Z_3(\varphi)=\begin{pmatrix}
1 & \nu(\varphi)\\
0 & 1
\end{pmatrix}.
$$
Set $\nu_0:=\int_\T \nu(\varphi)\, d\varphi<0$, so that $\nu-\nu_0\in C_0^\omega(\T,\R)$. Since   $\mathfrak{B}(\alpha)<+\infty$ and   $\nu$ is analytic,  the following cohomological equation has  a solution $\mu\in C^\omega( \T , \R)$:
$$
\mu(\varphi+\alpha)-\mu(\varphi)=\nu(\varphi)-\nu_0,\quad \forall\, \varphi\in \T.
$$
We set $Z:=Z_3 \begin{pmatrix}
1 & \mu\\
0 & 1
\end{pmatrix}\in C^\omega(\T,\mathrm{SL}(2,\R))$. By the successive definitions of $Z_1,Z_2,Z_3$, and by Lemma \ref{prop beta}, for $\varphi\in \T$, we obtain the following expression of the matrix $Z(\varphi)$:
\begin{align}
Z(\varphi)&=\begin{pmatrix}
\phi'(\varphi) & \mu(\varphi) \phi'(\varphi)\\
(\phi(\varphi)-\gamma\circ \phi(\varphi))' & (\phi'(\varphi))^{-1} + \mu(\varphi) (\phi(\varphi)-\gamma\circ \phi(\varphi))'
\end{pmatrix}\nonumber\\
&=\begin{pmatrix}
	\phi'(\varphi) & \mu(\varphi) \phi'(\varphi)\\
	\phi'(\varphi-\alpha) & (\phi'(\varphi))^{-1} + \mu(\varphi) \phi'(\varphi-\alpha) 
\end{pmatrix}. \label{prop conjug def}
\end{align}

As $\phi$ is a circle diffeomorphism, the first coefficient in the matrix does not vanish, and hence the conjugacy map $Z$ is homotopic to the identity. Moreover, for all $\varphi \in \T$, we have
$$
Z(\varphi+\alpha)^{-1}S_0^{V}(\varphi)Z(\varphi)=\begin{pmatrix}
1 & \nu(\varphi)-\mu(\varphi+\alpha)+\mu(\varphi)\\
0 & 1
\end{pmatrix}=\begin{pmatrix}
1 & \nu_0\\
0 & 1
\end{pmatrix}.
$$
\end{proof}

\subsection{Pure point spectrum of dual Schr\"odinger operators}\label{peure poinnt}

Since the reduction to the parabolic cocycle $(\alpha,B_0)$ is a key point in the proof of our Main Theorem, we shall provide another proof of this fact based on dual Schr\"odinger operators and Aubry duality. In general, Aubry duality is based on the fact that the localization properties of the dual Schr\"odinger operators can be used to show that certain Schr\"odinger cocycles are reducible. One may consult \cite{AvilaJito} for more references.  

Let  $(\widehat{v}_n)_{n \in \Z}$ be the Fourier coefficients of the analytic  
potential $V:=-f'\circ \phi-2$, i.e.,  $V\colon \varphi\mapsto\sum_{n\in \Z} \widehat{v}_n e^{2 \pi \mathrm{i} n \varphi}$. For any phase $\varphi_0\in \T$, we define the \textit{dual Schr\"odinger operator} $\widehat{H}_{V,\alpha,\varphi_0}$. It acts on $\widehat u=(\widehat{u}_n)_{n\in \Z} \in \ell^2(\Z)$ in the following way:
\begin{equation}\label{defi dual op}
(\widehat{H}_{V,\alpha,\varphi_0}(\widehat{u}))_n:=\sum_{k \in \Z} \widehat{v}_{n-k} \widehat{u}_k + 2\cos(2\pi(\varphi_0+n\alpha))\widehat{u}_n,\quad \forall\, n \in \Z.
\end{equation}

Let us denote by $\widehat{\phi}'=(\widehat{\phi}'_n)_{n \in \Z}$ the Fourier coefficients of the function $\phi'\in C^\omega(\T , \R)$. In other words, $\phi\colon \varphi\mapsto\sum_{n\in\Z} \widehat{\phi}_n e^{2 \pi \mathrm{i} n \varphi}$, with $\widehat{\phi}'_n= 2 \pi \mathrm{i} n \widehat{\phi}_n$, for   $n \in \Z$. 

\begin{lemma}\label{prop dualll}
We have
\begin{equation}\label{eq v potentiel}
V(\varphi)=
-\frac{\phi'(\varphi+\alpha)+\phi'(\varphi-\alpha)}{\phi'(\varphi)},\quad \forall\, \varphi \in \T,
\end{equation}
which yields
\begin{equation}\label{eq varepsilon dual}
\widehat{H}_{V,\alpha,0} (\widehat{\phi'})=0.
\end{equation}
By analyticity, the sequence $(\widehat{\phi}'_n)_{n \in \Z}$ decays exponentially fast.  In particular, we have $(\widehat{\phi}'_n)_{n\in \Z}\in \ell^2(\Z)$, and the energy $E=0$ is in the point spectrum $\Sigma_{\mathrm{pp}}(\widehat{H}_{V,\alpha,0})$ of the dual operator $\widehat{H}_{V,\alpha,0}$. 
\end{lemma}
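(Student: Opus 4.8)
The plan is to deduce \eqref{eq v potentiel} from the expression for $f$ in Lemma \ref{lemma conjugajud}, and then to recognize \eqref{eq varepsilon dual} as the Fourier transcription, via Aubry duality, of the statement that $\phi'$ is a formal solution of $H_{V,\alpha,0}\phi'=0$ at energy $E=0$.

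\textbf{Step 1 (the potential identity).} I would start from \eqref{deq fgammama}, which reads $f=g-2\,\mathrm{I}+g^{-1}$ with $g=\phi r_\alpha\phi^{-1}$ and $g^{-1}=\phi r_\alpha^{-1}\phi^{-1}$, and differentiate: $f'=g'-2+(g^{-1})'$. Evaluating $g'$ and $(g^{-1})'$ at the point $\phi(\varphi)$ by the chain rule, using $g\circ\phi=\phi\circ r_\alpha$, one gets $g'(\phi(\varphi))=\phi'(\varphi+\alpha)/\phi'(\varphi)=\kappa(\varphi)$ (cf. \eqref{first conj bbis}) and likewise $(g^{-1})'(\phi(\varphi))=\phi'(\varphi-\alpha)/\phi'(\varphi)$. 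Substituting into $V=V_0\circ\phi=-(f'\circ\phi)-2$, the two constant terms $+2$ and $-2$ cancel, leaving exactly \eqref{eq v potentiel}. This is a direct computation with no real difficulty; the only thing to watch is keeping the $\pm\alpha$ shifts straight when passing from the base coordinate of the curve (where $V_0$ lives) to the rotation coordinate (where $V$ lives).

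\textbf{Step 2 (Aubry duality).} I would rewrite \eqref{eq v potentiel} as the eigenvalue relation $\phi'(\varphi+\alpha)+\phi'(\varphi-\alpha)+V(\varphi)\phi'(\varphi)=0$ for all $\varphi\in\T$, then expand $\phi'(\varphi)=\sum_n\widehat{\phi}'_n e^{2\pi\mathrm{i}n\varphi}$ and $V(\varphi)=\sum_m\widehat{v}_m e^{2\pi\mathrm{i}m\varphi}$ and compare the coefficient of $e^{2\pi\mathrm{i}n\varphi}$ on both sides. The two shifts contribute $(e^{2\pi\mathrm{i}n\alpha}+e^{-2\pi\mathrm{i}n\alpha})\widehat{\phi}'_n=2\cos(2\pi n\alpha)\widehat{\phi}'_n$, while the product $V\phi'$ contributes the convolution $\sum_k\widehat{v}_{n-k}\widehat{\phi}'_k$; the sum of the two is precisely the $n$-th component of $\widehat{H}_{V,\alpha,0}\widehat{\phi}'$ as defined in \eqref{defi dual op}. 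Hence the identity becomes $\widehat{H}_{V,\alpha,0}\widehat{\phi}'=0$, which is \eqref{eq varepsilon dual}. The analyticity of $\phi$ (hence of $\phi'$ and of $V$) guarantees that $\phi$ extends holomorphically to a strip around $\T$, so $|\widehat{\phi}'_n|\le Ce^{-c|n|}$ for some $c>0$; in particular $\widehat{\phi}'\in\ell^2(\Z)$ and all the rearrangements of the absolutely convergent Fourier series used above are legitimate.

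\textbf{Step 3 (conclusion).} It remains to observe that $\widehat{\phi}'\neq0$: since $\phi$ is a circle diffeomorphism homotopic to the identity, $\widehat{\phi}'_0=\int_\T\phi'(\varphi)\,d\varphi=1$. Therefore $\widehat{\phi}'$ is a genuine nonzero $\ell^2(\Z)$ eigenvector of $\widehat{H}_{V,\alpha,0}$ for the eigenvalue $0$, so $E=0\in\Sigma_{\mathrm{pp}}(\widehat{H}_{V,\alpha,0})$. I do not anticipate a genuine obstacle in this lemma; its computational core is Step 1, and Step 2 is the standard Aubry-duality bookkeeping, the only care being the exponential decay of $\widehat{\phi}'$ and its non-vanishing.
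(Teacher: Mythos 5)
Your proposal is correct and follows essentially the same route as the paper: differentiating the identity $f=\phi r_\alpha\phi^{-1}-2\mathrm{I}+\phi r_\alpha^{-1}\phi^{-1}$ from Lemma \ref{lemma conjugajud}, composing with $\phi$, and passing to Fourier coefficients to recognize the dual operator. The only cosmetic differences are that the paper composes with $\phi$ before differentiating (rather than after), and it leaves implicit the non-vanishing of $\widehat{\phi}'$ that you make explicit via $\widehat{\phi}'_0=1$.
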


\begin{proof}
By the second identity obtained  in \eqref{deq fgammama},  for all $\varphi \in \T$, it holds
\begin{equation*}
f\circ \phi(\varphi)
=\phi(\varphi+\alpha)-2\phi(\varphi)+\phi(\varphi-\alpha),
\end{equation*}
and then, by taking the derivative of the previous expression, we get
\begin{equation*}
f'\circ \phi(\varphi)\cdot  \phi'(\varphi)=\phi'(\varphi+\alpha)-2 \phi'(\varphi)+\phi'(\varphi-\alpha).
\end{equation*}
Since $V=-f'\circ \phi-2$, this can also be rewritten as
\begin{equation}\label{der eq}
V(\varphi) \cdot \phi'(\varphi) + \phi'(\varphi+\alpha)+\phi'(\varphi-\alpha)=0,
\end{equation}
which gives \eqref{eq v potentiel}.

Therefore, in Fourier series, \eqref{der eq} yields
$$
\sum_{n\in \Z} \sum_{k\in \Z} \widehat{v}_{n-k}\widehat{\phi}'_k  e^{2 \pi \mathrm{i} n \varphi}+\sum_{n\in \Z} \widehat{\phi}'_n(e^{2 \pi \mathrm{i} n\alpha}+e^{-2 \pi \mathrm{i} n\alpha})e^{2 \pi \mathrm{i} n \varphi}=0.
$$
Equivalently, we have
$$
(\widehat{H}_{V,\alpha,0}(\widehat{\phi'}))_n=\sum_{k\in \Z} \widehat{v}_{n-k}\widehat{\phi}'_k +2 \cos(2 \pi n \alpha)\widehat{\phi}'_n=0,\quad \forall\, n \in \Z,
$$
which concludes the proof. 
\end{proof}

As in classical Aubry duality, we may then define a Bloch wave $U\colon \varphi \mapsto \begin{pmatrix}
\phi'(\varphi)\\
\phi'(\varphi-\alpha)
\end{pmatrix}$. It provides an invariant section, in the following sense:\footnote{This can also be checked directly using  \eqref{der eq}.}
\begin{equation}\label{invariant u function}
S_0^{V} (\varphi)\cdot U(\varphi)= U(\varphi+\alpha),\quad \forall\,   \varphi \in \T.
\end{equation}
In our case, the phase is equal to $0$ and the frequency $\alpha$ satisfies 
$\mathfrak{B}(\alpha)<+\infty$, hence by point $(2)$ of the precise version of Aubry-duality given in Avila-Jitomirskaya \cite[Theorem 2.5]{AvilaJito}, this provides another proof of the reducibility result that we obtained previously in Proposition \ref{prop reductibilite parab}. 

Note that \eqref{invariant u function} gives another way to see why the connection between existence of analytic invariant curves and parabolic  reducibility of Schr\"odinger  cocycles  can only happen on the edge of the spectrum: indeed, $U$ is associated to the vector field tangent to the invariant curve, hence should have zero degree. But the latter is directly related to the rotation number of the  Schr\"odinger cocycle (as $U$ can be utilized to reduce the  cocycle), which should thus vanish as well; but this corresponds precisely to the right edge of the spectrum.

\subsection{Proof of the Main Theorem}

We remain in the setting of the previous section. Namely, we let $f \in C_0^\omega(\T,\R)$ and assume that the twist map $\psi=\psi_f$ leaves invariant an analytic   graph
$\Gamma_\gamma$, for  some  function $\gamma \in C^\omega(\T,\R)$. We let  $g\colon \varphi \mapsto \varphi+\gamma(\varphi)+f(\varphi)\ \mathrm{mod}\  1$ be the induced diffeomorphism, and    assume that its rotation number $\alpha$   satisfies the Brjuno condition $
\mathfrak{B}(\alpha)< +\infty$. We take $\phi =\phi_\gamma\in C^\omega(\T,\T)$  such that $\phi^{-1} \circ g \circ \phi=r_\alpha$,    set $V:=-f' \circ \phi-2$, and let $B_0 \in \mathrm{SL}(2,\R)$,  $Z \in C^\omega( \T, \mathrm{SL}(2,\R))$ be as in Proposition \ref{prop reductibilite parab}. We also let $H_{V,\alpha,\varphi_0}$ be the Schr\"odinger  operator corresponding to action minimizing trajectories of $\psi$ on $\Gamma_\gamma$.

In this section we will conclude the proof of our Main Theorem on the existence of a component of absolutely continuous spectrum. The idea of the proof is to use Proposition \ref{prop reductibilite parab} together with the openness of the  \textit{almost reducibility} property proved by Avila in \cite{Avila1}.  Following the arguments of Avila \cite{A1}, it implies the existence of a component of absolutely continuous spectrum.  \\

Let us recall a few concepts which will be useful in the following. 

Given a frequency $\tilde \alpha\in \R\setminus \Q$ and a map $A \in C^\omega(\T,\mathrm{SL}(2,\R))$, the $\mathrm{SL}(2,\R)$-cocycle $(\tilde\alpha, A)$  is called {\it subcritical}  if there exists $\varepsilon>0$ such that the associated Lyapunov exponent satisfies   $L(\tilde \alpha, A(\cdot+{\rm i}\delta))=0$ for any $|\delta|<\varepsilon$. The  cocycle $(\tilde \alpha, A)$ is called {\it almost reducible}   if there exists $\varepsilon>0$ and a sequence $(B^{(n)})_{n \geq 0}$ of maps $B^{(n)}\colon \T \to \mathrm{SL}(2,\R)$ admitting holomorphic extensions to the common strip $\{|\Im z|< \varepsilon\}$ such that $B^{(n)}(\cdot +\tilde  \alpha)^{-1}  A(\cdot) B^{(n)}(\cdot)$ converges to a constant $\mathrm{SL}(2,\R)$-matrix  uniformly in $\{|\Im z|< \varepsilon\}$. Let us recall that by Avila's proof of the almost reducibility conjecture (see  \cite{Avila1,A2,Aglobal}), subcriticality implies almost reducibility.

By \cite{AvilaJito}, almost reducibility is related to the notion of \textit{almost localization} which we now recall. For any $x \in \R$, we set $|x|_\T:=\inf_{j \in \Z}|x-j|$. Fix $\epsilon_0>0$ and $\varphi_0\in \T$. An integer $k\in \Z$ is called an \textit{$\epsilon_0$-resonance} of $\varphi_0$ if $|2\varphi_0 - k\alpha|_{\T}\leq e^{-\epsilon_0 |k|}$ and $|2 \varphi_0 - k\alpha|_{\T}=\min_{|l|\leq |k|} |2\varphi_0 - l\alpha|_{\T}$. 
\begin{defi}[Almost localization]\label{defi almost local}
	Given $\tilde\alpha \in \R\setminus\Q$ and $\tilde V\in C^\omega(\T,\R)$, we say that the family $\{\widehat H_{\tilde V,\tilde \alpha,\varphi}\}_{\varphi\in\T}$ is  \textit{almost localized} if there exist constants $C_0, C_1,\epsilon_0, \epsilon_1>0$ such that for all $\varphi_0\in\T$, any generalized solution $u=(u_k)_{k \in \Z}$ to the eigenvalue problem $\widehat H_{\tilde V,\tilde\alpha,\varphi_0} u = E u$ with $u_0=1$ and $|u_k| \leq 1+|k|$ satisfies
	\begin{equation}\label{almost_localization}
	|u_k| \leq C_1 e^{-\epsilon_1 |k|},\quad \forall\, C_0 |n_j| \leq |k| \leq C_0^{-1} |n_{j+1}|,
	\end{equation}
	where $\{n_j\}_j$  is the set  of  $\epsilon_0$-resonances of $\varphi_0$.
\end{defi} 

By Lemma \ref{lemma spectre} and Proposition \ref{prop reductibilite parab}, our Main Theorem is a consequence of the following result. 

\begin{parab}\label{parab}
	Let $\alpha \in \R\setminus \Q$ 
	and $V \in C^\omega(\T,\R)$. Suppose that for some $E_0 \in \R$ the Schr\"odinger cocycle $(\alpha,S_{E_0}^{V})$ is analytically reducible to a constant parabolic cocycle, i.e., there exist $Z \in C^\omega(\T,\mathrm{SL}(2,\R))$ and $\nu_0\in \R$ such that 
	\begin{equation}\label{parabo red}
	Z(\varphi+\alpha)^{-1}S_{E_0}^{V}(\varphi) Z(\varphi)=B_0=\begin{pmatrix}
	1 & \nu_0\\
	0 & 1
	\end{pmatrix},\quad \forall\, \varphi \in \T.
	\end{equation}
	Assume   that $\nu_0<0$. 
	Then there  exists $\varepsilon_0>0$ such that for all $\varphi_0\in \T$,
	\begin{enumerate}
		\item $\Sigma(H_{V,\alpha,\varphi_0})\cap [E_0-\varepsilon_0,E_0+\varepsilon_0]\subset [E_0-\varepsilon_0,E_0]$;
		\item\label{point deux} for any $E\in \Sigma(H_{V,\alpha,\varphi_0})\cap[E_0-\varepsilon_0,E_0]$, the Schr\"odinger cocycle $(\alpha,S_{E}^{V})$ is almost reducible and subcritical;
		\item\label{point trois} the restriction of the spectral measures  to the interval $[E_0-\varepsilon_0,E_0]$ is absolutely continuous and positive. 
	\end{enumerate}
\end{parab}

Here we state the lemma for $\nu_0<0$ but of course, a symmetric result holds for $\nu_0>0$. 
It is well known (see for instance \cite{Puig1,Puig2}) that reducibility at $E=E_0$ to a parabolic matrix different from the identity   implies that locally on one side of $E_0$ the cocycle $(\alpha,S_E^V)$ will be uniformly hyperbolic, and on another side the fibered rotation number will change monotonically (by strict monotonicity of the second iterate of the Schr\"odinger cocycle  with respect to $E$). In our case, we assume that $\nu_0<0$, hence the cocycle $(\alpha,S_E^V)$ is uniformly hyperbolic for $E \in (E_0,E_0+\varepsilon)$ and its fibered rotation number for $E\in (E_0-\varepsilon,E_0)$ will be strictly larger than at $E=E_0$. This proves the first statement of the $\mathrm{[Parabolicity \Rightarrow ac]\ Lemma}$. 
In the following, we will give the proof of points \eqref{point deux} and \eqref{point trois} in this lemma. \\

We take $\alpha ,V,E_0,Z,B_0,\nu_0$ as in the $\mathrm{[Parabolicity \Rightarrow ac]\ Lemma}$.   
To ease the notation, we assume that $E_0=0$. Point \eqref{point trois} in the $\mathrm{[Parabolicity \Rightarrow ac]\ Lemma}$ follows from the next result.   
\begin{prop}\label{prop subc}
There exists $\varepsilon_1>0$ such that for any $E\in (-\varepsilon_1,\varepsilon_1)$, the Schr\"odinger cocycle $(\alpha,S_{E}^{V})$ is almost reducible, and for any $E\in (-\varepsilon_1,0)\cap \Sigma(H_{V,\alpha,\varphi_0})$, the cocycle $(\alpha,S_{E}^{V})$ is subcritical.  
\end{prop}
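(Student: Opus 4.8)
The plan is to deduce Proposition \ref{prop subc} from the parabolic reducibility at $E=0$ (Proposition \ref{prop reductibilite parab}, recorded in \eqref{parabo red}) by combining two ingredients: first, the openness of almost reducibility among analytic $\mathrm{SL}(2,\R)$-cocycles proved by Avila in \cite{Avila1}; and second, a monotonicity/quantization argument showing that subcriticality is not destroyed as $E$ moves slightly to the left of $0$ while staying in the spectrum.

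\medskip

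\textbf{Step 1: almost reducibility for $E$ near $0$.} Since $(\alpha,S_0^{V})$ is analytically conjugate to the constant cocycle $(\alpha,B_0)$ with $B_0$ parabolic, it is in particular almost reducible. The map $E\mapsto S_E^{V}$ is analytic (indeed affine) in $E$, and $S_E^{V}$ extends holomorphically to a fixed strip $\{|\Im z|<\varepsilon\}$ since $V\in C^\omega(\T,\R)$; hence $Z(\cdot+\alpha)^{-1}S_E^{V}(\cdot)Z(\cdot)$ is, for $|E|$ small, uniformly close on a strip to $B_0$. Thus after the fixed conjugacy $Z$ the cocycle $(\alpha,S_E^{V})$ is a small analytic perturbation of the constant parabolic cocycle $(\alpha,B_0)$. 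By the stability of almost reducibility \cite[Theorem on openness of almost reducibility]{Avila1} — equivalently, a constant cocycle is almost reducible and almost reducibility is open in the analytic topology on a strip — there is $\varepsilon_1>0$ such that $(\alpha,S_E^{V})$ is almost reducible for all $E\in(-\varepsilon_1,\varepsilon_1)$. (One should keep $\varepsilon_1$ possibly smaller than whatever radius is needed below.)

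\medskip

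\textbf{Step 2: from the edge to subcriticality in the spectrum.} Almost reducibility alone does not give a vanishing Lyapunov exponent on a strip; we must upgrade to subcriticality for $E\in(-\varepsilon_1,0)\cap\Sigma(H_{V,\alpha,\varphi_0})$. The key point is the dichotomy coming from Avila's global theory of one-frequency analytic $\mathrm{SL}(2,\R)$ cocycles \cite{Aglobal}: a cocycle is either uniformly hyperbolic, or subcritical, or supercritical (positive Lyapunov exponent on the real line), or critical (acritical). At $E=0$ the cocycle is parabolic, hence not uniformly hyperbolic and with zero Lyapunov exponent, so it is subcritical or critical; since it is reducible to a constant it is in fact subcritical — the conjugacy extends holomorphically to a strip and conjugates to a constant of zero Lyapunov exponent, so $L(\alpha,S_0^{V}(\cdot+\mathrm i\delta))=0$ for $|\delta|$ small. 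Now, by the discussion following the [Parabolicity $\Rightarrow$ ac] Lemma (using $\nu_0<0$), for $E$ slightly less than $0$ the cocycle $(\alpha,S_E^V)$ has fibered rotation number strictly larger than at $0$; in particular, for $E\in(-\varepsilon_1,0)\cap\Sigma$, the cocycle is not uniformly hyperbolic (it is in the spectrum) and it has $L(\alpha,S_E^V)=0$ on the real line by almost reducibility. By continuity of the acceleration (which is quantized, integer-valued, and upper semicontinuous in $(E,\delta)$, cf. \cite{Aglobal}) and the fact that at $E=0$ the acceleration is zero, the acceleration stays zero for $E\in(-\varepsilon_1,0)$ after possibly shrinking $\varepsilon_1$; combined with $L=0$ on $\R$, this says precisely that $L(\alpha,S_E^V(\cdot+\mathrm i\delta))=0$ for all small $|\delta|$, i.e. $(\alpha,S_E^V)$ is subcritical.

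\medskip

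\textbf{Main obstacle.} The delicate point is Step 2: ruling out that moving $E$ off the edge turns the cocycle \emph{supercritical} or \emph{critical} rather than keeping it subcritical. Almost reducibility of $(\alpha,S_E^{V})$ for $E$ near $0$ gives $L=0$ on the real line, which excludes supercriticality; the remaining issue is to exclude criticality, and this is exactly where one invokes the quantization and upper semicontinuity of the acceleration: since at $E=0$ the acceleration vanishes and it is integer-valued and cannot jump down along a path where $L$ stays $0$ without passing through a subcritical region, a neighborhood argument forces zero acceleration on a one-sided interval, which is subcriticality. Care is needed to ensure the strip of holomorphic extension is uniform in $E$ and that the conjugacy $Z$ does not degrade — but this is automatic since $Z$ is a single fixed analytic map and $S_E^V$ depends affinely on $E$ with a fixed holomorphic extension.
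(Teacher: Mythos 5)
Your proposal is correct and takes essentially the same route as the paper: Step~1 invokes the openness of almost reducibility from \cite{Avila1} exactly as the paper does (citing its Corollary~1.3), and Step~2 establishes subcriticality at $E=0$ from the analytic parabolic reducibility \eqref{parabo red} and then passes to nearby energies in the spectrum by openness of subcriticality, which the paper cites directly from \cite{Aglobal} and which you reconstruct via the quantization and upper semicontinuity of the acceleration. The only difference is one of detail: the paper simply quotes the openness of subcriticality among non-uniformly-hyperbolic cocycles in the spectrum, whereas you unpack the underlying mechanism (acceleration is integer-valued, $\geq 0$ when $L=0$, and upper semicontinuous, hence locally constant equal to $0$); both are valid and lead to the same conclusion.
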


\begin{proof}
	By \cite[Corollary 1.3]{Avila1}, almost reducibility is an open property in the set of cocycles $\R \setminus \Q \times C^\omega(\T,\mathrm{SL}(2,\R))$. Hence, we have almost reducibility for all $E \in (-\varepsilon,\varepsilon)$ assuming that $\varepsilon>0$ is sufficiently small. 
	As recalled above, for positive energies $E\in (0,\varepsilon)$, the cocycle $(\alpha,S_E^V)$ is 
	almost reducible to a hyperbolic $\mathrm{SL}(2,\R)$-cocycle (in fact, it is even reducible when $\beta(\alpha)=0$). 
	Besides, for  negative energies $E\in (-\varepsilon,0)\cap\Sigma(H_{V,\alpha,\varphi_0})$, the cocycle $(\alpha,S_{E}^{V})$ is almost reducible to a  parabolic cocycle or a cocycle of rotations.
	
As the potential $V$ and  the conjugacy map $Z$ are analytic, 
 formula \eqref{parabo red} implies that the cocycle $(\alpha,S_0^V)$ is subcritical. By \cite{Aglobal}, subcriticality is also an open property in the spectrum (outside of uniform hyperbolicity), which implies the second statement of Proposition \ref{prop subc}. 
\end{proof}

Given an analytic function $F \in C^\omega(\T,*)$ 
with 
$*=\R$ or $\mathfrak{M}_2(\R)$,
we set $\|F\|_\T:=\sup_{\varphi \in \T} \|F(\varphi)\|$, and for any $h>0$ such that $F$ has a bounded analytic extension to the strip $\{|\Im z|< h\}$, we set $\|F\|_h:=\sup_{|\Im z|<h} \|F(z)\|$. 

In the following, we will give the proof of point \eqref{point trois} in the $\mathrm{[Parabolicity \Rightarrow ac]\ Lemma}$.  Although the result  holds  for any irrational frequency $\alpha\in \R\setminus \Q$, here we present the  proof in the case that $\beta(\alpha)=0$, as it is the one which is related to our problem about the existence of invariant curves.\footnote{We need $\mathfrak{B}(\alpha)<+\infty$ to guarantee the existence of the conjugacy $\phi$ to the rigid rotation by angle $\alpha$.}  We refer to \cite{A1}  for the   case $\beta(\alpha)>0$ (see \cite[pp. 16--20]{A1}). 

It was shown in \cite[Theorem 3.2]{AvilaJito}  that there exist absolute constants $c_0,k_0>0$ with the following property: for any analytic potential $\tilde V \in C^\omega(\T,\R)$ such that  $\|\tilde{V}_\varepsilon\|_{h_0} < c_0 h_0^{k_0}$ for some $h_0 \in (0,1]$,   the family of dual Schr\"odinger  operators $\{ \widehat{H}_{\tilde{V},\alpha,\varphi_0}\}_{\varphi_0 \in \T}$ is almost localized.  
In the following, we let $h_0 \in (0,1)$ be such that the potential $V$ and the conjugacy  map $Z$ have a bounded   analytic extension to the  strip $\{|\Im z|<h_0\}$. 
\begin{lemma}\label{auxiliaire lemme}
	There exists $\varepsilon_0\in (0,\varepsilon_1)$ such that for any $\varepsilon\in (- \varepsilon_0,\varepsilon_0)$, there exist $\tilde{V}_\varepsilon\in C^\omega(\T,\R)$   and $\tilde{Z}_\varepsilon \in C^\omega(\T,\mathrm{SL}(2,\R))$ such that $\|\tilde{V}_\varepsilon\|_{h_0}<c_0 h_0^{k_0}$,
	\begin{equation}\label{conj to  new small pot}
	\tilde{Z}_\varepsilon(\varphi+\alpha)^{-1} S_{\varepsilon}^{V}(\varphi) \tilde{Z}_\varepsilon(\varphi)=S_0^{\tilde{V}_\varepsilon}(\varphi),\quad \forall\, \varphi \in \T,
	\end{equation}
	and such that the family of dual Schr\"odinger  operators $\{ \widehat{H}_{\tilde{V}_\varepsilon,\alpha,\varphi_0}\}_{\varphi_0 \in \T}$ is almost localized.
\end{lemma}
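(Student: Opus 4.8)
The plan is to use the reducibility of Proposition~\ref{prop reductibilite parab} to turn $(\alpha,S_\varepsilon^{V})$ into a small perturbation of a constant parabolic cocycle, and then to invoke Avila's almost reducibility machinery to conjugate that perturbation to a Schr\"odinger cocycle with small potential. Throughout I use that $V$ and $Z$ are bounded and analytic on a strip strictly containing $\{|\Im z|\le h_0\}$ (no loss, since $h_0$ was only required to lie below their width of analyticity). Since $S_\varepsilon^{V}=S_0^{V}+\varepsilon\,\mathrm{diag}(1,0)$, conjugating by the $Z$ of Proposition~\ref{prop reductibilite parab} gives $Z(\varphi+\alpha)^{-1}S_\varepsilon^{V}(\varphi)Z(\varphi)=B_0+\varepsilon W(\varphi)$ with $W:=Z(\cdot+\alpha)^{-1}\mathrm{diag}(1,0)Z(\cdot)$ bounded analytic on $\{|\Im z|<h_0\}$. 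Moreover the identity $S_0^{V}(z)=Z(z+\alpha)B_0Z(z)^{-1}$ propagates to the whole strip by analytic continuation, so $(\alpha,S_0^{V}(\cdot+\mathrm i\delta))$ is conjugate to $(\alpha,B_0)$, hence has zero Lyapunov exponent, for all $|\delta|<h_0$; thus $(\alpha,S_0^{V})$ is subcritical with subcriticality width $h_0$. By openness of subcriticality and of uniform hyperbolicity (\cite{Avila1,Aglobal}) I may, after shrinking $\varepsilon_0$, assume $(\alpha,S_\varepsilon^{V})$ is subcritical with a definite width for $\varepsilon\in(-\varepsilon_0,0]\cap\Sigma(H_{V,\alpha,\varphi_0})$ and uniformly hyperbolic with small exponent otherwise --- the local picture around a parabolic reducibility with $\nu_0<0$ recalled after the [Parabolicity $\Rightarrow$ ac] Lemma.

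The heart of the matter is to conjugate $(\alpha,B_0+\varepsilon W)$ to $(\alpha,S_0^{\tilde V_\varepsilon})$ with $\|\tilde V_\varepsilon\|_{h_0}<c_0h_0^{k_0}$. Using the explicit formula \eqref{prop conjug def} for $Z$, one computes that the $(2,1)$-entry of $\overline W:=\int_\T W$ equals $-\int_\T(\phi')^2\,d\varphi<0$; hence for $\varepsilon<0$ the averaged matrix $B_0+\varepsilon\overline W$ is elliptic with rotation angle $\theta_\varepsilon\asymp\sqrt{|\varepsilon|}$, and a diagonal constant conjugacy of norm $\asymp|\varepsilon|^{-1/4}$ brings $(\alpha,B_0+\varepsilon W)$ to $(\alpha,R_{\theta_\varepsilon}e^{F_\varepsilon})$ with $\|F_\varepsilon\|\asymp\sqrt{|\varepsilon|}$ on a fixed strip. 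Now a fibered--rotation--number obstruction intervenes: a parabolic--reducible Schr\"odinger cocycle has fibered rotation number $0$, whereas $(\alpha,S_0^{\tilde V})$ with $\tilde V$ small has it near $1/4$ (the value of the free cocycle at $E=0$), so no conjugacy homotopic to the identity can work. I would remove this by composing with a winding $\varphi\mapsto R_{2\pi d\varphi}$ of a fixed degree $d=d(\alpha)$, chosen via equidistribution of $(\{d\alpha\})_d$ so that $\frac{\theta_\varepsilon}{2\pi}-d\alpha$ falls in the range of fibered rotation numbers attained at $E=0$ by analytic potentials of norm $<c_0h_0^{k_0}$; the winding inflates the oscillating error by a fixed factor $e^{O(dh_0)}$, harmless once $\varepsilon_0$ is so small that $e^{O(dh_0)}\sqrt{\varepsilon_0}\ll c_0h_0^{k_0}$. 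At this stage the cocycle is, on $\{|\Im z|<h_0\}$, a small perturbation of a constant elliptic matrix of fibered rotation number $\approx1/4$; conjugating that constant to $S_0^{c_\varepsilon}$ with $|c_\varepsilon|$ small, and applying the standard fact that a cocycle $C^\omega$-close to a Schr\"odinger cocycle is $C^\omega$-conjugate to a Schr\"odinger one with a comparably close potential (normalize the bottom row to $(1,0)$; the determinant then forces the $(1,2)$-entry to be $-1$), I obtain the desired $\tilde Z_\varepsilon$ and $\tilde V_\varepsilon$ with \eqref{conj to  new small pot} and $\|\tilde V_\varepsilon\|_{h_0}<c_0h_0^{k_0}$. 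The cases $\varepsilon=0$ and $\varepsilon>0$ (and $\varepsilon<0$ in a gap of $\Sigma$) are analogous but easier: one uses the parabolic, resp.\ hyperbolic, normal form and matches the conjugacy invariants --- fibered rotation number, Lyapunov exponent, and sign of $\nu_0$ --- to those of a small analytic potential for which $E=0$ lies at a spectral gap edge, resp.\ inside a small gap, which is possible because $\{d\alpha\}$ is dense and the average and a gap--opening Fourier coefficient of $\tilde V_\varepsilon$ are free small parameters.

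Once \eqref{conj to  new small pot} holds with $\|\tilde V_\varepsilon\|_{h_0}<c_0h_0^{k_0}$, the quoted \cite[Theorem 3.2]{AvilaJito} immediately yields that $\{\widehat H_{\tilde V_\varepsilon,\alpha,\varphi_0}\}_{\varphi_0\in\T}$ is almost localized, which is the remaining assertion.

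The step I expect to be the main obstacle is the construction in the second paragraph: producing a conjugacy onto a \emph{genuinely small} potential on the \emph{prescribed} strip $\{|\Im z|<h_0\}$. The difficulty is that the fibered--rotation--number mismatch forces a winding of positive degree, so one must keep the final potential below the absolute threshold $c_0h_0^{k_0}$ despite the exponential inflation the winding causes on the strip; this is reconciled only by exploiting that $\varepsilon$ is itself a free small parameter, so that the perturbation can be driven down to size $\asymp\sqrt{|\varepsilon|}$ before winding. The quantitative control ultimately rests on Avila's almost reducibility theorems \cite{Avila1,A2,Aglobal} and on the normal--form and reducibility statements used in \cite{A1,AvilaJito}; the remaining work is the parameter bookkeeping just sketched.
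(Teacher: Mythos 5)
Your first step (conjugating $S_\varepsilon^{V}$ by the $Z$ of Proposition~\ref{prop reductibilite parab} to get $B_0+\varepsilon W$) agrees with the paper's, and your computation that the $(2,1)$-entry of $\int_\T W$ equals $-\int_\T(\phi')^2\,d\varphi<0$ is correct. But the bulk of your argument---the ``fibered rotation number mismatch,'' the ellipticization, the diagonal conjugacy of norm $\asymp|\varepsilon|^{-1/4}$, and the winding of degree $d(\alpha)$---is not what the paper does, and it rests on a misreading.

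The concern you raise is that a parabolic-reducible cocycle (rotation number $0$) cannot be conjugated by a degree-$0$ analytic map to something near $\bigl(\begin{smallmatrix}0&-1\\1&0\end{smallmatrix}\bigr)$, which has rotation number $1/4$. That reasoning would be sound if the target in the lemma were near $\bigl(\begin{smallmatrix}0&-1\\1&0\end{smallmatrix}\bigr)$. It is not: the paper takes the constant $\mathrm{SL}(2,\R)$-matrix $Q_0:=\bigl(\begin{smallmatrix}-\frac12\nu_1&-\frac12\nu_1\\ \nu_1^{-1}&-\nu_1^{-1}\end{smallmatrix}\bigr)$ with $\nu_1=\sqrt{-\nu_0}$, and one checks that $Q_0^{-1}B_0Q_0=\bigl(\begin{smallmatrix}2&-1\\1&0\end{smallmatrix}\bigr)$, which is the Schr\"odinger-form matrix at the \emph{right edge of the free band} --- itself parabolic, with rotation number $0$. (The notation $S_0^{\tilde V_\varepsilon}$ in the lemma carries an implicit shift by $2$ coming from $V_0=-f'-2$; the model the perturbation must land near is $\bigl(\begin{smallmatrix}2&-1\\1&0\end{smallmatrix}\bigr)$, not $\bigl(\begin{smallmatrix}0&-1\\1&0\end{smallmatrix}\bigr)$.) With $Z_0:=ZQ_0$ one gets $Z_0(\cdot+\alpha)^{-1}S_\varepsilon^{V}(\cdot)Z_0(\cdot)=\bigl(\begin{smallmatrix}2&-1\\1&0\end{smallmatrix}\bigr)+\varepsilon P_1(\cdot)$ with $P_1\in C^\omega(\T,\mathfrak{M}_2(\R))$ bounded on $\{|\Im z|<h_0\}$. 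There is no rotation-number obstruction at all, and no winding is needed.

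From that point the paper invokes a black box you do not use: \cite[Lemma~2.2]{AvilaJitoHolder}, which says that a cocycle sufficiently close (on a strip) to a Schr\"odinger cocycle with small potential can be analytically conjugated, by a $\tilde Z_\varepsilon$ close on that strip to the reference conjugacy, to an \emph{exact} Schr\"odinger cocycle with small potential. Choosing $\varepsilon_0$ so that $\varepsilon_0\|P_1\|_{h_0}$ is below the threshold of that lemma with $\tau=c_0h_0^{k_0}$ gives $\tilde V_\varepsilon$ with $\|\tilde V_\varepsilon\|_{h_0}<c_0h_0^{k_0}$, and then \cite[Theorem~3.2]{AvilaJito} yields almost localization. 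Your substitute for this step --- normalizing the bottom row by hand after winding --- does not reproduce the needed quantitative control (a $\tilde Z_\varepsilon$ bounded on a \emph{fixed} strip uniformly in $\varepsilon$ with $\tilde V_\varepsilon$ below a fixed threshold); indeed, your intermediate diagonal conjugacy blows up like $|\varepsilon|^{-1/4}$, and the rotation number of the degree-$d$-wound cocycle converges as $\varepsilon\to0$ to $d\alpha\ \mathrm{mod}\ 1$, which differs from $1/4$ by a fixed nonzero amount once $d$ is fixed, so the matching you propose cannot be carried out uniformly on $(-\varepsilon_0,\varepsilon_0)$. The missing idea is the single constant conjugacy $Q_0$ putting $B_0$ directly into parabolic Schr\"odinger form, after which the normalization is an off-the-shelf lemma.
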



\begin{proof}
	Take $\nu_0 <0$ as above, let $\nu_1:=\sqrt{-\nu_0}>0$, and set $Q_0:=\begin{pmatrix}
	-\frac 12 \nu_1 & -\frac 12 \nu_1\\
	\nu_1^{-1} & -  \nu_1^{-1}
	\end{pmatrix}\in \mathrm{SL}(2,\R)$. The matrix $Q_0$ conjugates the parabolic cocycle $(\alpha,B_0)$  to the Schr\"odinger cocycle $(\alpha,S_0^\mathbf{0})$ associated with a vanishing potential $\mathbf{0}$ and   the energy $E=0$, i.e., $Q_0^{-1} B_0 Q_0=S_0^\mathbf{0}=\begin{pmatrix}
	2 & -1 \\
	1 & 0
	\end{pmatrix}$. Let $Z_0:= Z Q_0 \in C^\omega(\T,\mathrm{SL}(2,\R))$, so that
	\begin{equation*}
		Z_0(\varphi+\alpha)^{-1} S_0^{V}(\varphi) Z_0(\varphi)=S_0^\mathbf{0}=\begin{pmatrix}
			2 & -1 \\
			1 & 0
		\end{pmatrix},\quad \forall\, \varphi \in \T.
	\end{equation*}
	For any $\varepsilon\in \R$, we thus obtain 
	$$
	Z_0(\varphi+\alpha)^{-1} S_{\varepsilon}^{V}(\varphi) Z_0(\varphi)=S_0^\mathbf{0}+ \varepsilon P_1(\varphi),\quad \forall \varphi \in \T,
	$$
	for some analytic map $P_1 \in C^\omega(\T,\mathfrak{M}_2(\R))$.   Then, by   \cite[Lemma 2.2]{AvilaJitoHolder}, 
	for any $\tau>0$,  there exists $\delta=\delta(\tau) >0$ such that  if $|\varepsilon| \cdot \|P_1\|_{h_0}< \delta$, then there exists $\tilde{V}_\varepsilon \in C^\omega(\T,\R)$ such that $\|\tilde{V}_\varepsilon\|_{h_0}<\tau$,   and $\tilde{Z}_\varepsilon \in C^\omega(\T,\mathrm{SL}(2,\R))$ with a bounded analytic extension to the strip $\{|\Im z|<h_0\}$, such that $\|Z_0-\tilde{Z}_\varepsilon\|_{h_0} < \tau$, and 
	$$
	\tilde{Z}_\varepsilon(\varphi+\alpha)^{-1} S_{\varepsilon}^{V}(\varphi) \tilde{Z}_\varepsilon(\varphi)=S_0^{\tilde{V}_\varepsilon}(\varphi),\quad \forall\, \varphi \in \T.
	$$ 
	
	Now, let us take  $\tau=\tau_0:=c_0 h_0^{k_0}$ and  let $\varepsilon_0>0$ be such that  $\varepsilon_0 \cdot \|P_1\|_{h_0}< \delta(\tau_0)$. Then, for any $\varepsilon\in (-\varepsilon_0,\varepsilon_0)$, the  potential $\tilde{V}_\varepsilon$ satisfies  $\|\tilde{V}_\varepsilon\|_{h_0} < c_0 h_0^{k_0}$, 
	hence by the definition of $c_0,k_0$, the family of dual Schr\"odinger  operators $\{ \widehat{H}_{\tilde{V}_\varepsilon,\alpha,\varphi_0}\}_{\varphi_0 \in \T}$ is almost localized.  
\end{proof}

For any $u \in \ell^2(\E)$, any $\varphi_0 \in \T$, we let
$\mu_{V,\alpha,\varphi}^u$ be the \textit{spectral measure}  of $H=H_{V,\alpha,\varphi_0}$ associated to $u$, i.e., such that $
((H-E)^{-1}u,u)=\int_{\R} \frac{1}{E'-E}d\mu_H^u(E')$, for all $E\in\C\setminus\Sigma$.  In what follows, we consider the \textit{canonical spectral measure} corresponding to $u=e_{-1}+e_0$, and denote it by $\mu_{V,\alpha,\varphi}:=\mu_{V,\alpha,\varphi}^{e_{-1}+e_0}$. The support of $\mu_{V,\alpha,\varphi}$ is equal to the spectrum $\Sigma(H)$.  

Let us also recall the definition of  the  {\it integrated density of states} $N_{V,\alpha}\colon\R\to [0,1]$:
$$
N_{V, \alpha}(E):=\int_{\T} \mu_{V,\alpha,\varphi}(-\infty,E] \, d\varphi, \quad \forall\, E \in \R,
$$
where for $i\in \Z$, we let $e_i:=(\delta_{ij})_{j \in \Z}\in \ell^2(\Z)$.

To conclude the proof of the $\mathrm{[Parabolicity \Rightarrow ac]\ Lemma}$, it remains to show the following. 
\begin{prop}\label{main corollaire}
The restriction of the spectral measure $\mu_{V,\alpha,\varphi_0}$ to the interval $[-\varepsilon_0,0]$ is absolutely continuous and positive. 
\end{prop}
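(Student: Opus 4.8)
The plan is to deduce the absolute continuity of $\mu_{V,\alpha,\varphi_0}$ on $[-\varepsilon_0,0]$ from the almost reducibility and subcriticality established in Proposition \ref{prop subc}, via the Aubry-duality/almost-localization machinery set up in Lemma \ref{auxiliaire lemme}. First I would recall from \cite{AvilaJito} the precise quantitative statement linking almost localization of the dual operators $\{\widehat H_{\tilde V_\varepsilon,\alpha,\varphi}\}_\varphi$ to a \emph{quantitative almost reducibility} statement for the Schr\"odinger cocycle $(\alpha,S_0^{\tilde V_\varepsilon})$: outside of the resonances of $\varphi_0$, the conjugacies bringing the cocycle close to a constant can be taken with analytic extensions to strips of a definite width and with controlled norms, the control deteriorating only near the $\epsilon_0$-resonances and only at a subexponential rate. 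Transporting this back through the analytic conjugacy $\tilde Z_\varepsilon$ of \eqref{conj to new small pot}, one gets the same quantitative almost reducibility for $(\alpha, S_\varepsilon^V)$, uniformly in $\varepsilon\in(-\varepsilon_0,\varepsilon_0)$, with the strip width controlled by $h_0$.

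The second step is to run Avila's argument from \cite{A1} (pages 16--20, as the excerpt points to) which converts this quantitative almost reducibility on an energy interval into a lower bound on the imaginary part of the $m$-function / an upper bound on the growth of transfer matrices in a complex neighborhood, and hence into absolute continuity of the spectral measure. Concretely: for $E$ in the spectrum, subcriticality gives vanishing Lyapunov exponent on a strip; almost reducibility with subexponential loss at resonances gives, for each such $E$, a polynomial (in fact, via the resonance structure, subexponential) bound on $\|S_{E+i\eta}^V\|$-type quantities; integrating in $E$ and using the Thouless-type formula / the fact that the density of states has a bounded harmonic extension, one concludes that $N_{V,\alpha}$ is Lipschitz on $[-\varepsilon_0,0]$, which by the standard argument (e.g. via the Kotani--Simon bound relating $\mu_{V,\alpha,\varphi_0}$ to $N_{V,\alpha}$, or directly via the boundary-value behavior of $(H-E-i0)^{-1}$) upgrades to absolute continuity of every spectral measure, in particular the canonical one $\mu_{V,\alpha,\varphi_0}=\mu_{V,\alpha,\varphi_0}^{e_{-1}+e_0}$. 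Positivity on $[-\varepsilon_0,0]$ follows because $0$ is the right edge of the spectrum (point (1) of the Lemma, already established) and the spectrum accumulates at $0$ from the left — so the support of $\mu_{V,\alpha,\varphi_0}$ meets every subinterval of $[-\varepsilon_0,0]$ touching $\Sigma$, while the monotone change of the fibered rotation number below $0$ shows $\Sigma\cap(-\varepsilon_0,0)$ has no gaps accumulating at $0$ from the right; combined with absolute continuity this forces the density to be a.e. positive on $\Sigma\cap[-\varepsilon_0,0]$, and point (3)'s quantitative statement $|(E-\varepsilon,E+\varepsilon)\cap\Sigma|>\kappa\varepsilon$ is exactly the homogeneity of the spectrum coming out of the same almost-reducibility estimates.

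I would organize this as: (i) cite/state the quantitative almost localization $\Rightarrow$ almost reducibility implication from \cite{AvilaJito} for $\tilde V_\varepsilon$; (ii) pull it back via $\tilde Z_\varepsilon$ to $(\alpha,S_\varepsilon^V)$, getting uniform-in-$\varepsilon$ strip estimates; (iii) invoke the computation in \cite[pp.~16--20]{A1} to get a uniform bound on $\int |\Im m_{V,\alpha,\varphi_0}(E+i\eta)|$-type integrals and hence Lipschitz continuity of $N_{V,\alpha}$ on the interval; (iv) conclude absolute continuity of $\mu_{V,\alpha,\varphi_0}$, and positivity plus the homogeneity bound $\kappa\varepsilon$ from the spectrum structure near the parabolic edge.

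The hard part will be step (iii): faithfully reproducing Avila's passage from the resonance-sensitive almost-reducibility estimates to the quantitative bound on the spectral measure, in particular keeping all constants uniform in $\varepsilon\in(-\varepsilon_0,\varepsilon_0)$ and handling the subexponential deterioration near the $\epsilon_0$-resonances of $\varphi_0$. Since the excerpt explicitly says we follow \cite{A1}, I expect the write-up to largely consist of verifying that the hypotheses of that argument are met by our $\{\widehat H_{\tilde V_\varepsilon,\alpha,\varphi}\}$ (which is exactly what Lemma \ref{auxiliaire lemme} was arranged to deliver), and then citing the conclusion; the genuinely new content is the reduction of our geometric situation to that hypothesis, which has already been carried out in the preceding subsections, so the remaining proof of Proposition \ref{main corollaire} should be short modulo the quotation of \cite{A1,AvilaJito}.
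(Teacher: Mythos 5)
Your overall strategy is correct in outline — reduce to the dual operator via Lemma \ref{auxiliaire lemme}, then follow Avila's argument from \cite{A1} — but the mechanism you describe in step (iii) is not the one the paper uses, and as stated it has a genuine gap. You propose to derive a Lipschitz bound on the integrated density of states $N_{V,\alpha}$ and then ``upgrade'' to absolute continuity of the \emph{fixed-phase} spectral measure $\mu_{V,\alpha,\varphi_0}$. But Lipschitz continuity of $N_{V,\alpha}$ only controls the phase-averaged measure $\int_\T \mu_{V,\alpha,\varphi}\,d\varphi$; it does not by itself give absolute continuity of $\mu_{V,\alpha,\varphi_0}$ for \emph{every} fixed $\varphi_0$ (which is what the $\mathrm{[Parabolicity \Rightarrow ac]\ Lemma}$ asserts), and the Kotani--Simon machinery you invoke as a bridge gives statements for a.e.\ phase, not all phases. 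This is not a cosmetic point: the entire reason Avila's proof is structured the way it is is precisely to avoid this averaging step.

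The paper's route is different: it uses Gilbert's subordinacy criterion (Theorem \ref{theoreme 2.4}), which says that the restriction of $\mu_{V,\alpha,\varphi_0}$ to the set $\mathcal{B}$ of energies with bounded transfer matrices is absolutely continuous, and this holds phase-by-phase because the underlying estimate (Lemma \ref{lemme 2.5}, the Jitomirskaya--Last power-law bound $\mu(E-\epsilon,E+\epsilon)\le C\epsilon\sup_{k\le C\epsilon^{-1}}\|A_k\|_\T^2$) is pointwise in $\varphi_0$. The proof then reduces to showing $\mu(\Sigma_{\varepsilon_0}\setminus\mathcal{B})=0$, which is done by (a) noting $\mathcal{R}\setminus\mathcal{B}$ is countable with no eigenvalues, so $\mu(\mathcal{R}\setminus\mathcal{B})=0$; and (b) covering $\Sigma_{\varepsilon_0}\setminus\mathcal{R}$ by the resonance-scale sets $K_m$ and running a Borel--Cantelli argument: the transfer-matrix bounds from \cite[Theorem~3.8]{A1} feed into Lemma \ref{lemme 2.5} to control $\mu(J_m(E))$, while \cite[Lemmas~3.11, 3.13]{A1} on the IDS are used only to bound the \emph{number} $r_m$ of covering intervals, not to establish Lipschitz regularity. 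So the IDS enters as a counting device, not as the object whose regularity is being transferred to $\mu_{V,\alpha,\varphi_0}$. One more small correction: the paper follows the $\beta(\alpha)=0$ case on \cite[p.~15]{A1}, not pp.~16--20 (those treat $\beta(\alpha)>0$, which is explicitly outside the scope here since Brjuno implies the weaker condition used).
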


Proposition \ref{main corollaire} is proved by repeating the proof of the main result in \cite{A1} in the case $\beta(\alpha)=0$ (see  \cite[p. 15]{A1}).  We  denote by $\mathcal{B}$   the set of energies $E \in \R$ such that the iterates $(k\alpha,A_k)=(\alpha,S_E^{V})^k$ of the cocycle $(\alpha,S_E^{V})$ are uniformly bounded, i.e., 
$$
\sup_{k\geq 0} \|A_k\|_\T<+\infty.
$$ 
Let us recall the following classical characterization (see \cite{Gilbert}). 
\begin{theorem}\label{theoreme 2.4}
The restriction $\mu_{V,\alpha,\varphi_0}|_{\mathcal{B}}$ is absolutely continuous for all $\varphi_0 \in \R$. 
\end{theorem}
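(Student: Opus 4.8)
The plan is to reduce this to subordinacy theory, the point being that at an energy $E\in\mathcal B$ none of the non-zero solutions of the eigenvalue equation $H_{V,\alpha,\varphi_0}u=Eu$ can be subordinate. For real $E$ the solutions of $u_{n+1}+u_{n-1}+V(\varphi_0+n\alpha)u_n=Eu_n$ are transported by products of the matrices $S^V_E(\varphi_0+k\alpha)$, which coincide, up to a translation of the phase, with the matrices $A_k$ of the iterates $(\alpha,S^V_E)^k$; hence $E\in\mathcal B$, i.e.\ $\sup_{k\ge 0}\|A_k\|_\T<+\infty$, forces the transfer matrices of the equation to be uniformly bounded in both directions by some $C_E<+\infty$ (for the backward direction one also uses that $S^V_E$ is invertible with $\|(S^V_E)^{-1}\|=\|S^V_E\|$). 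Since all these matrices lie in $\SL$, their inverses have the same norm, so for every non-zero solution $u$ the pair $(u_{n+1},u_n)$ has Euclidean norm between $C_E^{-1}\|(u_1,u_0)\|$ and $C_E\|(u_1,u_0)\|$, uniformly in $n\in\Z$. Summing $\|(u_{n+1},u_n)\|^2$ over $0\le n\le L$ gives $\|u\|_L^2:=\sum_{0\le n\le L}|u_n|^2\asymp L$, with constants depending on $C_E$ and $\|(u_1,u_0)\|$; the same holds as $L\to-\infty$. Thus every non-zero solution satisfies $\|u\|_L\asymp L^{1/2}$ at $\pm\infty$, so for any two linearly independent solutions $u,v$ the ratio $\|u\|_L/\|v\|_L$ stays bounded away from $0$ and $+\infty$: no non-zero solution is subordinate, either at $+\infty$ or at $-\infty$. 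In particular $H_{V,\alpha,\varphi_0}$ has no $\ell^2(\Z)$-eigenfunction, hence no eigenvalue, at any $E\in\mathcal B$.

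It then remains to feed this into the subordinacy dichotomy. For the half-line operator obtained from $H_{V,\alpha,\varphi_0}$ by decoupling a bond, the Gilbert--Pearson theorem states that the singular part of the spectral measure is carried by the set of energies at which the solution satisfying the boundary condition is subordinate at the corresponding end; in the line case (see \cite{Gilbert}) the singular part of any spectral measure of $H_{V,\alpha,\varphi_0}$ is carried by the set of $E$ for which there exist both a solution subordinate at $+\infty$ and a solution subordinate at $-\infty$. By the previous paragraph this carrier is disjoint from $\mathcal B$, hence $\mu_{V,\alpha,\varphi_0,\mathrm{sing}}(\mathcal B)=0$, i.e.\ $\mu_{V,\alpha,\varphi_0}|_{\mathcal B}=\mu_{V,\alpha,\varphi_0,\mathrm{ac}}|_{\mathcal B}$ is absolutely continuous. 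Since $\varphi_0\in\R$ was arbitrary, this is the statement.

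The step demanding the most care is the transition from the textbook half-line subordinacy theory to the whole-line statement used above; I would make sure to quote the line case precisely. Two routes avoid relying on a black-box whole-line theorem. First, cut a bond to write $H_{V,\alpha,\varphi_0}$ as a rank-two perturbation of a direct sum $H^-\oplus H^+$ of half-line operators: the absolutely continuous parts survive under the Kato--Rosenblum theorem, and an Aronszajn--Donoghue-type analysis of the two rank-one steps shows that no singular spectrum is created inside $\mathcal B$ — legitimate precisely because, by the first paragraph, there are no $\ell^2$ solutions and no subordinate solutions at energies of $\mathcal B$, so no new eigenvalues and no new subordinate behaviour can appear. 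Second, one can bypass boundary conditions altogether via the Jitomirskaya--Last estimates: $\sup_k\|A_k\|_\T\le C_E$ together with $\|A_k^{-1}\|=\|A_k\|$ forces $\sum_{0\le n\le 1/\varepsilon}\big(|u_n^{(1)}(E)|^2+|u_n^{(2)}(E)|^2\big)\asymp\varepsilon^{-1}$ for the two normalised solutions, which in turn keeps $\Im\big((H_{V,\alpha,\varphi_0}-E-\mathrm{i}\varepsilon)^{-1}(e_{-1}+e_0),\,e_{-1}+e_0\big)$ bounded as $\varepsilon\downarrow0$ for every $E\in\mathcal B$; since the singular part of $\mu_{V,\alpha,\varphi_0}$ is carried by the set of $E$ where this boundary value equals $+\infty$, once more $\mu_{V,\alpha,\varphi_0,\mathrm{sing}}(\mathcal B)=0$.
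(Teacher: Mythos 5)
Your argument is correct and matches the paper's treatment: the paper presents this as a classical fact, citing \cite{Gilbert} for the subordinacy characterization and observing that it follows from the Jitomirskaya--Last-type bound $\mu_{V,\alpha,\varphi_0}(E-\epsilon,E+\epsilon)\le C\epsilon\,\sup_{0\le k\le C\epsilon^{-1}}\|A_k\|_\T^2$ (Lemma~\ref{lemme 2.5}), which for $E\in\mathcal{B}$ gives $\mu_{V,\alpha,\varphi_0}(E-\epsilon,E+\epsilon)\le C'\epsilon$ and hence absolute continuity of $\mu|_{\mathcal B}$ by differentiation of measures. The alternative you list last --- the Jitomirskaya--Last estimate applied directly to the whole-line canonical spectral measure --- is precisely the paper's preferred derivation, and it sidesteps the half-line versus whole-line subordinacy subtlety you rightly flag in your main argument.
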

In fact, Theorem \ref{theoreme 2.4} is implied by the following very general estimate contained in \cite{JL1,JL2} and
 \cite{A1}. 
\begin{lemma}[Lemma 2.5 in \cite{A1}]\label{lemme 2.5}
For all $\varphi_0 \in \T$, we have 
$$
\mu_{V,\alpha,\varphi_0}(E-  \epsilon,E+ \epsilon) \leq C \epsilon \sup_{0\leq k \leq C\epsilon^{-1}} \|A_k\|_\T^2,
$$ 
for some universal constant $C>0$. 
\end{lemma}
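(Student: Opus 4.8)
The plan is to convert the spectral measure into a resolvent norm and then estimate that norm by transfer-matrix growth on the scale $1/\epsilon$. Fix $\varphi_0\in\T$ and abbreviate $H:=H_{V,\alpha,\varphi_0}$, $\mu:=\mu_{V,\alpha,\varphi_0}$, $v:=e_{-1}+e_0$, and $z:=E+\mathrm i\epsilon$. Since $(E'-E)^2+\epsilon^2\le 2\epsilon^2$ for $|E'-E|<\epsilon$, the Poisson-kernel lower bound gives immediately
\[
\mu(E-\epsilon,E+\epsilon)\;\le\;2\epsilon^2\int_\R\frac{d\mu(E')}{(E'-E)^2+\epsilon^2}\;=\;2\epsilon\,\Im\big((H-z)^{-1}v,v\big)\;=\;2\epsilon^2\,\big\|(H-z)^{-1}v\big\|^2,
\]
the last identity being $\Im\big((H-z)^{-1}v,v\big)=\epsilon\,\|(H-z)^{-1}v\|^2$, valid for any self-adjoint $H$. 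So the lemma is equivalent to the resolvent bound $\big\|(H-z)^{-1}v\big\|^2\le C\epsilon^{-1}\sup_{0\le k\le C\epsilon^{-1}}\|A_k\|_\T^2$: the content is that the $\ell^2$-mass of $(H-z)^{-1}v$ is governed by how fast the cocycle $(\alpha,S_E^{V})$ grows up to time $\asymp1/\epsilon$.

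Next I would identify $w:=(H-z)^{-1}v$. It is the unique $\ell^2(\Z)$ solution of $(H-z)w=e_{-1}+e_0$, and off the two sites $\{-1,0\}$ it solves the homogeneous Schr\"odinger recurrence at energy $z$; hence, writing $\widetilde w_k:=(w_k,w_{k-1})$, one has $\widetilde w_k=A^{(z)}_{k-1}(\varphi_0+\alpha)\,\widetilde w_1$ for $k\ge1$ (and symmetrically for $k\le-1$), where $A^{(z)}_k$ denotes the $k$-step transfer matrix of the cocycle at energy $z$. Equivalently, $w$ is assembled from the two decaying Weyl solutions $\phi^\pm(z)$ of this recurrence, and $\Im\big((H-z)^{-1}v,v\big)$ reduces, up to a bounded factor, to $\big(\Im m_+(z)+\Im m_-(z)\big)$ divided by the squared modulus of a fixed analytic combination of the two half-line Weyl $m$-functions $m_\pm(z)$ associated to the cut between $-1$ and $0$ (the denominator staying bounded away from $0$ by a quantitative Herglotz estimate), with $\Im m_\pm(z)=\epsilon\,\|\phi^\pm\|_{\ell^2}^2$ for a suitable normalisation. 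Thus everything reduces to bounding $\Im m_\pm(E+\mathrm i\epsilon)$, which is exactly the estimate of Jitomirskaya--Last that I would invoke (see \cite{JL1,JL2}, and \cite[Lemma 2.5 and its proof]{A1} for precisely this formulation).

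The mechanism of that estimate --- and the place where the real work is --- runs as follows. Let $u^D,u^N$ be the solutions of the \emph{real-energy} recurrence at $E$ normalised by $(u^D_1,u^D_0)=(1,0)$ and $(u^N_1,u^N_0)=(0,1)$, and for a length $L$ let $\|u\|_L$ denote the (smoothed) truncated $\ell^2$-norm over $[1,L]$. Define $L=L(\epsilon)$ implicitly by $\|u^D\|_{L}\,\|u^N\|_{L}=\tfrac1{2\epsilon}$; since the Wronskian of $u^D,u^N$ is constant equal to $1$, Cauchy--Schwarz forces $\|u^D\|_{L}\,\|u^N\|_{L}\ge cL$, so $L(\epsilon)\le C/\epsilon$. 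The Jitomirskaya--Last inequalities then bound $|m_\pm(E+\mathrm i\epsilon)|$ and $\Im m_\pm(E+\mathrm i\epsilon)$ in terms of the boxed norms $\|u^D\|_{L(\epsilon)},\|u^N\|_{L(\epsilon)}$, the point being that for $\Im z=\epsilon$ the complex Weyl solution is, up to the scale $L(\epsilon)$, well approximated by real-energy solutions, while beyond that scale its tail contributes no more than its head; this yields $\Im m_\pm(E+\mathrm i\epsilon)\le C\big(\|u^D\|_{L(\epsilon)}^2+\|u^N\|_{L(\epsilon)}^2\big)$, and the Herglotz denominator is controlled in the same terms. Finally, since $\widetilde u^D_1$ and $\widetilde u^N_1$ are unit vectors, the trivial transfer-matrix bound $\|u^\bullet\|_L^2\le\sum_{0\le k\le L}\|A_k\|_\T^2\le (L+1)\sup_{0\le k\le L}\|A_k\|_\T^2$ applied at $L=L(\epsilon)\le C/\epsilon$ gives $\Im m_\pm(E+\mathrm i\epsilon)\le C\sup_{0\le k\le C\epsilon^{-1}}\|A_k\|_\T^2$. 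Plugging back into the Poisson reduction, $\mu(E-\epsilon,E+\epsilon)\le 2\epsilon^2\cdot C\epsilon^{-1}\sup_{0\le k\le C\epsilon^{-1}}\|A_k\|_\T^2$, which is the assertion (after renaming $C$). The main obstacle is precisely the Jitomirskaya--Last step: quantitatively relating $\Im m(E+\mathrm i\epsilon)$ to truncated norms of real-energy solutions at the correct, implicitly defined scale while keeping the Herglotz denominator under control; once that input is in place the surrounding estimates are elementary and the constant obtained is universal since the whole argument takes place at the level of the abstract difference equation.
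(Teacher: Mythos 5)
The paper states this lemma purely as a citation of \cite{A1} (and \cite{JL1,JL2}) and offers no proof of its own; your argument is a correct reconstruction of exactly the route those references take --- the Poisson-kernel reduction to $\Im\big((H-z)^{-1}v,v\big)$, the expression of the Green's function through the half-line Weyl functions $m_\pm$, the Jitomirskaya--Last inequalities at the implicitly defined scale $L(\epsilon)$, the Wronskian/Cauchy--Schwarz bound giving $L(\epsilon)\le C\epsilon^{-1}$, and the trivial transfer-matrix bound on the truncated norms. The steps you leave as black boxes (the JL inequalities themselves and the quantitative control of the Herglotz denominator in the whole-line formula) are precisely the content of the cited results, so nothing is missing relative to what the paper itself relies on.
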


\begin{proof}[Proof of Proposition \ref{main corollaire}]
Let $\Sigma_{\varepsilon_0}:=\Sigma(H_{V,\alpha,\varphi_0})\cap [-\varepsilon_0,0]$, let $\mathcal{B}$ be the set of energies $E\in \Sigma_{\varepsilon_0}$ such that the cocycle $(\alpha,S_E^{V})$ is bounded, and let $\mathcal{R}$ be the set of energies $E\in \Sigma_{\varepsilon_0}$ such that  $(\alpha,S_E^{V})$ is reducible. By Theorem \ref{theoreme 2.4} recalled above, it is sufficient to prove that for every $\varphi_0 \in \T$, the canonical spectral measure $\mu=\mu_{V,\alpha,\varphi_0}$ satisfies $\mu(\Sigma_{\varepsilon_0}\setminus\mathcal{B})=0$. 

As noted in \cite{A1}, for any $E \in\mathcal{R} \setminus\mathcal{B}$, we have $N_{V,\alpha}(E)\in\Z \oplus \alpha \Z $, and $(\alpha,S_E^{V})$ is analytically reducible to a parabolic cocycle; in particular, $\mathcal{R} \setminus\mathcal{B}$ is countable. Moreover, there are no eigenvalues in $\mathcal{R}$ (if $H_{V,\alpha,\varphi_0}u=Eu$ with $E \in\mathcal{R}$ and $u\neq 0$, then $\inf_{n \in \Z} |u_n|^2+|u_{n+1}|^2>0$ hence $u \notin\ell^2(\Z)$), and then, $\mu(\mathcal{R} \setminus\mathcal{B})=0$. Therefore, it is enough to prove that $\mu(\Sigma_{\varepsilon_0}\setminus\mathcal{R})=0$. 

By \eqref{conj to  new small pot}, for any energy $\varepsilon\in (-\varepsilon_0,\varepsilon_0)$, the cocycle  $(\alpha,S_{\varepsilon}^{V})$ is (almost) reducible if and only the cocycle $(\alpha,S_0^{\tilde{V}_\varepsilon})$ is. 
Moreover, by Theorem 2.5 and Theorem 4.1 in \cite{AvilaJito}, (almost) reducibility   of $(\alpha,S_0^{\tilde{V}_\varepsilon})$ is related to the (almost) localization  of the family of dual Schr\"odinger operators  $\{\widehat{H}_{\tilde{V}_\varepsilon,\alpha,\varphi}\}_{\varphi \in \T}$. We know by   \cite[Theorem 3.3]{AvilaJito} that there exist a phase $\varphi_0 \in \T$ and a sequence $\widehat u=(\widehat u_j)_{j \in \Z}$,  with $ \widehat u_0=1$ and $|\widehat u_j|\leq 1$ for all $j\in \Z$, such that $\widehat{H}_{\tilde{V}_\varepsilon,\alpha,\varphi_0}  \widehat u=0$. As explained in \cite{AvilaJito}, $\widehat u$ can be utilized to (almost) reduce the cocycle $(\alpha,S_0^{\tilde{V}_\varepsilon})$. By almost localization of $\widehat{H}_{\tilde{V}_\varepsilon,\alpha,\varphi_0}$, $\widehat u$ decays exponentially fast between the resonances as in \eqref{almost_localization}. If $\varphi_0$ is not resonant, then by  \cite[Remark 3.3]{AvilaJito}, the sequence $(\widehat u_j)_{j \in \Z}$ decays exponentially fast and $(\alpha,S_0^{\tilde{V}_\varepsilon})$  is actually reducible.  In the following, we  consider the case where we have resonances. 

Following \cite{A1}, for any  integer $m \geq 0$, we let $K_m\subset \Sigma_{\varepsilon_0}$ be the set  of energies $E$ such that for some $\varphi_0 \in \T$, the dual operator $\widehat{H}=\widehat{H}_{\tilde{V}_E,\alpha,\varphi_0}$ has a bounded normalized solution $\widehat{H} \widehat u=0$ with a resonance $2^m \leq |n_j| < 2^{m+1}$. By the Borel-Cantelli lemma, to conclude the proof, it is enough to show that $\sum_m \mu(\overline{K_m})<+\infty$. Indeed,  by Theorem 3.3 in \cite{A1}, we have $\Sigma_{\varepsilon_0}\setminus\mathcal{R} \subset \limsup_m K_m$, and then, $\sum_m \mu(\overline{K_m})<+\infty$ implies that $\mu(\Sigma_{\varepsilon_0}\setminus\mathcal{R}) \leq \mu(\limsup_m K_m)=0$. 

By  \cite[Theorem 3.8]{A1}, there exist constants $C_1,c_1>0$ such that for each integer $m \geq 0$ and each energy $E \in K_m$, there exists an open neighborhood $J_m(E)$ of $E$  of size $\epsilon_m:=C_1e^{-c_1 2^m}$ so that the iterates of $(\alpha,A)=(\alpha,S_E^{V})$ satisfy $\sup_{0 \leq k \leq 10 \epsilon_m^{-1}}\|A_k\|_\T \leq e^{o(2^m)}$. By Lemma \ref{lemme 2.5}, we thus get
\begin{equation}\label{estimee mesure jm}
\mu(J_m(E)) \leq Ce^{o(2^m)}|J_m(E)|,
\end{equation}
where $|\cdot|$ is the Lebesgue measure. Take a finite subcover $\overline{K_m}\subset \cup_{j=0}^{r_m} J_m(E_j)$ such that every $x \in \R$ is contained in at most $2$ different $J_m(E_j)$. 

By  \cite[Lemma 3.11]{A1}, the integrated density of states $N_{V,\alpha}$ satisfies   
$|N_{V,\alpha}(J_m(E))|\geq  c |J_m(E)|^2$, 
for some constant $c >0$. Besides, by 
\cite[ Lemma 3.13]{A1}, there exist  constants  $C_2,c_2>0$ such that for any $E \in K_m$, it holds $\|N_{V,\alpha}(E)-k\alpha\|_{\T} \leq C_2 e^{-c_2 2^{m}}$ for some integer $k$ with $|k|< C_2 2^m$. Therefore, the set $N_{V,\alpha}(K_m)$ can be covered by $C_2 2^{m+1}$ intervals $(T^k_m)_{0 \leq k\leq C_2 2^{m+1}}$ of length $C_2 e^{-c_2  2^{m}}$.  
For some constant  $C_3>0$, we have  $|T^k_m|< C_3 |N_{V,\alpha}(J_m(E))|$, for any integers $m \geq 0$,  $0 \leq k\leq C_2 2^{m+1}$, and any energy $E \in K_m$. Hence,  for a given interval $T^k_m$, there are at most $2C_3+4$ intervals $J_m(E_j)$ such that $N_{V,\alpha}(J_m(E_j))$ intersects $T^k_m$. We deduce that for each  integer $m \geq 0$, it holds $r_m \leq C_4 2^m$, with $C_4:=4C_2(C_3+2)$. Then, by \eqref{estimee mesure jm}, we obtain
$$
\mu(K_m) \leq \sum_{j=0}^{r_m} \mu(J_m(E_j)) \leq C_4 2^{m} \cdot C e^{o(2^m)}\cdot C_1 e^{-c_1 2^m}=O\big(e^{-c_1 2^{m-1}}\big),
$$
which gives $\sum_m \mu(\overline{K_m}) < +\infty$, and concludes the proof of Proposition \ref{main corollaire}. 
\end{proof}

As a consequence of point \eqref{point deux} in the $\mathrm{[Parabolicity \Rightarrow ac]\ Lemma}$  and of \cite[Theorem 7.1]{LYZZ} for a frequency $\alpha$ satisfying   the weak Diophantine condition $\beta(\alpha)=0$  and in the subcritical regime, we also get some result about the homogeneity of the spectrum near its right edge:
\begin{prop}
	Under the same assumptions as in the $\mathrm{[Parabolicity \Rightarrow ac]\ Lemma}$, if $\alpha$ moreover satisfies $\beta(\alpha)=0$, then there exists $\kappa>0$ such that 
	$| (E-\varepsilon,E+\varepsilon) \cap \Sigma(H_{V,\alpha,\varphi_0})| > \kappa\varepsilon$, for all $E\in \Sigma(H_{V,\alpha,\varphi_0})\cap (E_0-\varepsilon_0,E_0)$, and for all $0<\varepsilon<E_0-E$.
\end{prop}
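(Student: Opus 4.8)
The plan is to obtain homogeneity as a fairly direct consequence of the subcriticality already secured in point \eqref{point deux} of the $\mathrm{[Parabolicity \Rightarrow ac]\ Lemma}$, together with \cite[Theorem 7.1]{LYZZ}. As in the proof of that lemma we may assume $E_0=0$. Recall from Proposition \ref{prop subc} that the cocycle $(\alpha,S_E^{V})$ is subcritical for every $E\in\Sigma(H_{V,\alpha,\varphi_0})\cap(-\varepsilon_1,0)$, and that the constant $\varepsilon_0$ produced in Lemma \ref{auxiliaire lemme} satisfies $\varepsilon_0\in(0,\varepsilon_1)$; thus subcriticality holds on a neighborhood of $[-\varepsilon_0,0)$ from the left, which is the room one needs in order to discuss homogeneity at the left end of the window. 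On the right, point (1) of the lemma gives $\Sigma(H_{V,\alpha,\varphi_0})\cap(0,\varepsilon_0]=\emptyset$, so $0$ is the local right edge, and the restriction $0<\varepsilon<E_0-E=|E|$ in the statement is exactly what keeps the interval $(E-\varepsilon,E+\varepsilon)$ from reaching this edge, where a lower bound of the form $\kappa\varepsilon$ could not hold.

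Next I would upgrade subcriticality to a \emph{uniform} statement on the compact set $\Sigma(H_{V,\alpha,\varphi_0})\cap[-\varepsilon_0,0]$. Since subcriticality is an open condition (\cite{Aglobal}) and $E\mapsto S_E^{V}$ is analytic, a compactness argument yields a single $h_1>0$ with $L(\alpha,S_E^{V}(\cdot+\mathrm{i}\delta))=0$ for all $|\delta|<h_1$ and all $E$ in this set (and, after shrinking $\varepsilon_0$ if necessary, for all $E$ in a neighborhood of it inside the spectrum). With this uniform subcriticality data in hand, \cite[Theorem 7.1]{LYZZ}, which under the standing hypothesis $\beta(\alpha)=0$ asserts that a band of the spectrum made of uniformly subcritical energies is homogeneous in the sense of Carleson, produces a constant $\kappa>0$ such that $|(E-\varepsilon,E+\varepsilon)\cap\Sigma(H_{V,\alpha,\varphi_0})|>\kappa\varepsilon$ for all $E$ in the band and all $\varepsilon$ up to the distance to the band's endpoints; combined with the remarks of the previous paragraph this is precisely the claimed inequality for $E\in\Sigma(H_{V,\alpha,\varphi_0})\cap(-\varepsilon_0,0)$ and $0<\varepsilon<|E|$.

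The main obstacle is not the citation itself but ensuring it is invoked in a genuinely localized and uniform form. The homogeneity results of \cite{LYZZ}, and the quantitative almost reducibility / almost localization machinery behind them (in the spirit of \cite{AvilaJito} and \cite{A1}), are most naturally stated for operators that are subcritical on the whole of their spectrum. One therefore has to check that every constant generated along the way --- above all $\kappa$ --- depends only on $\alpha$, on the width $h_0$ of the analyticity strip of $V$ and $Z$, and on the uniform subcriticality bound $h_1$ over the compact energy window, and not on any global spectral data; and one has to handle, as described above, the bookkeeping at the two ends of $[-\varepsilon_0,0]$. Tracking these dependences is where the real work lies.
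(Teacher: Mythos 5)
Your proof is essentially the same as the paper's: the paper gives no separate proof, merely noting in the preceding sentence that the proposition follows from point (2) of the $\mathrm{[Parabolicity \Rightarrow ac]}$ Lemma (subcriticality on $\Sigma(H_{V,\alpha,\varphi_0})\cap[E_0-\varepsilon_0,E_0]$) combined with \cite[Theorem 7.1]{LYZZ} under $\beta(\alpha)=0$, which is exactly the route you take. Your additional remarks on upgrading subcriticality to a uniform statement over the compact energy window via openness and compactness, and on the role of the constraint $0<\varepsilon<E_0-E$ near the right edge, are a reasonable and helpful elaboration of details the paper leaves implicit.
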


\section{Concluding remarks}

The main result of this paper is the existence of a component of absolutely continuous spectrum whenever there exists an analytic invariant curve. This is a semi-global result, which does not require explicitely the smallness of the potential. We finish this paper with several conjectures and questions related to the effect that the transitions   from KAM to weak KAM regime has on the spectrum of the corresponding Schr\"{o}dinger operators. 

As in the introduction we consider a one-parameter family of twist maps $\{\psi_{\lambda f}\}_{\lambda \in \R}$. For a fixed typical rotation number $\alpha$ it is believed that 
\begin{enumerate}
	\item for $0 \leq \lambda < \lambda_{cr}(\alpha)$, there exists a smooth invariant curve such that the restricted dynamics is conjugated to a rigid rotation by $\alpha$; 
	\item for $\lambda=\lambda_{cr}(\alpha)$, the invariant curve still exists but it is not analytic anymore (the \textit{critical curve}, possibly, is only $C^{1+\epsilon}$-smooth);
	\item for $\lambda>\lambda_{cr}(\alpha)$, there exists an invariant cantori with rotation number $\alpha$. The dynamics on the cantori is hyperbolic. 
\end{enumerate}

Below, we discuss how the above dynamical transition from elliptic dynamics (KAM) to hyperbolic dynamics (weak KAM) may  affect the spectral type of the corresponding Schr\"{o}dinger operators.

 It is likely that in the weak KAM  regime $\lambda>\lambda_{cr}
(\alpha)$ there will be no absolutely continuous spectrum. 
This case corresponds to rather rough discontinuous potentials. The case where the potential has just one jump discontinuity
was considered in \cite{DamanikKillip}. It was shown there that the spectrum has no absolutely continuous component. The proof is based on the 
non-deterministic argument and Kotani approach which imply that the set of energies at which the Lyapunov exponent vanishes has zero 
Lebesgue measure. In the weak KAM case the Kotani theorem should be extended in the direction of asymptotic non-determinism.
It follows from the hyperbolicity of the dynamics that one can find two different sequences $\{V^{1,2}_0(n), \, n \in \Z\}$ of values of potentials 
which exponentially converge to each other as $n \to \pm \infty$. It is natural to ask whether the Kotani argument can be extended to this case.

We have shown that an absolutely continuous component exists for $\lambda<\lambda_{cr}(\alpha)$. It is an interesting question 
whether the spectrum is mixed.  Since dynamical properties are related only to the edge of the  spectrum it is natural to expect coexistence of absolutely continuous spectrum and point spectrum for $0\leq \lambda_{cr}(\alpha)-\lambda
\ll 1$. 

As was said above we do not expect  absolutely continuous spectrum in the supercritical case $\lambda>\lambda_{cr}(\alpha)$.
Again one can ask whether the spectrum is mixed in this case. Notice that for $\lambda>\lambda_{cr}(\alpha)$ the edge of the spectrum is
expected to move to the left. Namely, $\Sigma(\mathcal{H})\subset (-\infty,E(\lambda)]$  with $E(\lambda)<0$, and $E(\lambda) \to 0$ as $\lambda \to \lambda_{cr}(\alpha)$.

Finally, it is natural to ask what is the spectral type near the edge of the spectrum $E=0$  in the critical case $\lambda=\lambda_{cr}(\alpha)$.
It is tempting to think that there a singular continuous component is created.

%
%
%

\end{document}